\pdfoutput=1

\documentclass[12pt,a4paper]{amsart}

\usepackage[utf8]{inputenc}
\usepackage[T1]{fontenc}
\usepackage{lmodern}
\linespread{1.05}
\frenchspacing
\usepackage[kerning, spacing, tracking]{microtype}



\usepackage[style=numams, 
            sorting=nyt, 
            isbn=false,  
            backref=true, 
            backend=biber]{biblatex}
\addbibresource{transfer.bib}
\setcounter{biburlnumpenalty}{200}


\usepackage{amsmath,amssymb, amsthm}
\usepackage{mathtools}
\usepackage{enumitem}

\usepackage{tikz-cd}

\usepackage[colorlinks,  linkcolor=blue, citecolor=blue, pdfa]{hyperref}

%
\usepackage[hcentering, hscale=0.70, vscale=0.74]{geometry}




\hyphenation{A-zu-ma-ya}



\swapnumbers
\newtheorem{thm}{Theorem}[section]
\newtheorem{cor}[thm]{Corollary}
\newtheorem{lemma}[thm]{Lemma}
\newtheorem{prop}[thm]{Proposition}

\theoremstyle{definition}
\newtheorem{defi}[thm]{Definition}

\theoremstyle{remark}
\newtheorem{remark}[thm]{Remark}


\newlist{enumthm}{enumerate}{1}  
\setlist[enumthm]{label= \textup{(\roman*)}}



\renewcommand{\phi}{\varphi}
\renewcommand{\rho}{\varrho}

\newcommand{\eps}{\varepsilon}


\renewcommand{\leq}{\leqslant}


\newcommand{\nbd}{\nobreakdash-\hspace{0pt}}  
\newcommand{\defemph}[1]{\textbf{#1}} 


\newcommand{\nats}{\mathbb{N}}   

\newcommand{\sym}[1]{\mathrm{S}_{#1}}  

\newcommand{\iso}{\cong}    

\DeclareMathOperator*{\bigdcup}{\mathaccent\cdot{\mathop{\bigcup}}}

\DeclarePairedDelimiter{\abs}{\lvert}{\rvert} 


\DeclareMathOperator{\Z}{\mathbf{Z}}
\DeclareMathOperator{\C}{\mathbf{C}}         
\DeclareMathOperator{\Gal}{Gal}  

\DeclareMathOperator{\enmo}{End} 

\DeclareMathOperator{\trd}{trd}  

\DeclareMathOperator{\BrCliff}{BrCliff}
\DeclareMathOperator{\Cores}{Cores}
\DeclareMathOperator{\Res}{Res}
\DeclareMathOperator{\Ind}{Ind}

\DeclareMathOperator{\Br}{Br}  



\hypersetup{pdfauthor={Frieder Ladisch},
            pdftitle={Corestriction for algebras with group action},
            pdfkeywords={Brauer-Clifford group, 
                         equivariant Brauer group,
                         corestriction,
                         tensor induction,
                         G-algebras,
                         Azumaya algebras
                        } 
            }



\begin{document}
\title{Corestriction for algebras with group action}
\author{Frieder Ladisch}
\address{Universität Rostock\\
         Institut für Mathematik\\
         Ulmenstr.~69, Haus~3\\
         18057 Rostock\\
         Germany}
\email{frieder.ladisch@uni-rostock.de}
\thanks{The author was 
partially supported by the DFG (Project: SCHU 1503/6-1)}
\subjclass[2010]{Primary 16K50, Secondary 16W22} 
\keywords{
   Brauer-Clifford group,
   equivariant Brauer group,
   corestriction,
   tensor induction,
   $G$-algebras,
   Azumaya algebras
}

\begin{abstract}
  We define a corestriction map for equivariant Brauer groups
  in the sense of Fröhlich and Wall, which contain as a special case
  the Brauer-Clifford groups introduced by Turull.
  We show that this corestriction map has similar properties as the corestriction
  map in group cohomology (especially Galois cohomology).
  In particular, 
  composing corestriction and restriction associated to 
  a subgroup $H\leq G$ amounts to
  powering with the index $\lvert G:H \rvert$.  
\end{abstract}
\maketitle

\section{Introduction}
The Brauer-Clifford group 
was introduced by 
Turull~\cite{turull09, turull11} 
to find correspondences
between certain families of irreducible characters of finite groups,
such that  fields of
values and Schur indices of corresponding characters are 
equal~\cite[cf.][]{turull08c}. 
The character families are usually characters lying over
some fixed irreducible character of a normal subgroup.
This is the topic of \emph{Clifford theory}.
A classical result in this context is a theorem of 
Gallagher which states that an invariant irreducible character
of a normal subgroup of a finite group
  extends to the whole group if and only if
it extends to every Sylow subgroup~\cite[Theorem 21.4]{huppCT}.
The proof uses the corestriction map of group cohomology.
The aim of this paper is to provide a similar tool
in the theory of the Brauer-Clifford group.
This will be applied in forthcoming work
in the Clifford theory over small fields~\cite[cf.][]{ladisch14c}.

Let $R$ be a commutative ring on which a group $G$
acts by ring automorphisms. 
As Herman and Mitra~\cite{HermanMitra11} have pointed out, 
the Brauer-Clifford group $\BrCliff(G,R)$ is a 
special case of the equivariant Brauer group introduced earlier
by Fröhlich and Wall~\cite[cf.][]{frohlichwall00}.
This group consists of certain 
equivalence classes of certain $G$-algebras over $R$.
By a $G$\nbd algebra over $R$ we mean an algebra
$A$ over $R$ on which $G$ acts such that
$r^g\cdot 1_A = (r\cdot 1_A)^g$ for all $r\in R$ and $g\in G$.

Let  $H\leq G$ be  a subgroup.
Then (by restriction) any $G$\nbd algebra can be viewed
as an $H$\nbd algebra.
This defines a map
\[ \Res=\Res^G_H\colon \BrCliff(G,R)\to \BrCliff(H, R)
\]
called restriction.
In this paper, we define a map
\[ \Cores=\Cores_H^G\colon \BrCliff(H,R)\to \BrCliff(G,R),
\]
which has analogous properties to the 
transfer or corestriction map from group cohomology.
This map will only be defined when the index
$\abs{G:H}$ is finite, and will be called 
\emph{corestriction}
or \emph{transfer} map.
The main result is that if $[A]$ is an equivalence class
of algebras in $\BrCliff(G,R)$, then we have
\[\Cores_H^G(\Res_H^G[A])=[A]^{\abs{G:H}}.
\]

We will first show how to construct a
$G$\nbd algebra $A^{\otimes G}$ over $R$,
given an arbitrary $H$\nbd algebra $A$ over $R$.
To do this, we use the well known concept
of \emph{tensor induction}.
We will review this concept, and give a simple conceptual definition
of it that seems not as widely known as it should be.
We will then show that tensor induction respects
equivalence of $G$\nbd algebras,
and thus defines a group homomorphism of 
Brauer-Clifford groups.
Finally, in the last section we prove the main result
mentioned above.

\section{On tensor induction}
Let $G$ be a group and $R$ a commutative 
$G$\nbd ring. 
By definition, 
a \defemph{$G$\nbd ring} is a ring $R$ on which
the group $G$ acts by ring automorphisms.
We use exponential
notation $r\mapsto r^g$ to denote this action.

An $R$\nbd module $V$ is called an $R$\nbd module with
compatible $G$\nbd action,
if $G$ acts on the abelian group $V$ 
such that
$(vr)g= vgr^g$ for all
$v\in V$, $r\in R$ and $g\in G$.
We often use exponential notation, that is,
we write $v^g$ instead of $vg$, 
in particular, when $V$ happens to be an $R$\nbd algebra.

For the moment, we continue to use right multiplicative
notation. 
Every $R$\nbd module with $G$\nbd action can 
be viewed as a right $RG$\nbd module,
where $RG$ denotes the \defemph{crossed product} of $R$
with $G$ 
(also called the \defemph{skew group ring}
 of $G$ over $R$).
This is the set of formal sums
\[ \sum_{g\in G}  gr_g , \quad r_g\in R,
\]
with multiplication induced by
$(g_1r_1)(g_2 r_2)= g_1g_2 r_1^{g_2}r_2$,
extended linearly.
Conversely, any right module over $RG$
can be viewed as an $R$\nbd module with
$G$\nbd action.

Let $H\leq G$ and let $V$ be a right $RH$\nbd module.
Since $RH\subseteq RG$, we may form
the induced module 
$V^G=\Ind_H^G(V)=V\otimes_{RH}RG$.
Set $\Omega= \{Hg\mid g\in G\}$
and $V_{\omega}=V\otimes g\subseteq V^G$
for $\omega=Hg\in \Omega$.
Then
$V_{\omega}$ depends only on the coset 
$\omega=Hg$,
not on the specific representative $g$.
Every $V_{\omega}$ is an $R$\nbd submodule
of $V^G$
and
\[ V^G = \bigoplus_{\omega\in \Omega} V_{\omega},
   \quad\text{with}\quad
   V_\omega g= V_{\omega g}
   \;\text{for all $\omega\in \Omega$}.
\] 
The modules $V_{\omega}$ and $V_{\omega g}$
are isomorphic as abelian groups, 
but in general not as $R$\nbd modules.
Instead, the map
$\rho_{\omega, g}\colon V_{\omega}\to V_{\omega g}$
given by $v\rho_{\omega, g}=vg$
has the property 
$(vr)\rho_{\omega, g} = v\rho_{\omega,g} r^g$.
Following Riehm~\cite{Riehm70}, we call
an $R$\nbd module $W$ a
\defemph{$g$-conjugate} of the $R$\nbd module
$V$, if there is an 
isomorphism $\kappa\colon V\to W$ of abelian groups
such that $(vr)\kappa=v\kappa r^g$ for all $v\in V$.
Thus $V\otimes g$ is a $g$\nbd conjugate of $V$.

The 
\defemph{tensor induced} module is, 
by  definition,
the tensor product
\[ V^{\otimes G}:=
   \bigotimes_{\omega\in \Omega} V_{\omega}
   \quad\text{(tensor product over $R$)}.
\]
This is made into an $RG$\nbd module by defining
\[ \left( \bigotimes_{\omega\in \Omega} v_{\omega}
   \right)g
   = \bigotimes_{\omega g\in \Omega}v_{\omega}g
   = \bigotimes_{\omega\in \Omega} v_{\omega g^{-1}} g,
\]
where $v_{\omega g^{-1}}g\in V_{\omega}$.
Some readers may prefer to enumerate
\[\Omega= \{Hg_1,Hg_2,\dotsc, Hg_n\}, 
\]
and then write
\[ (v_1\otimes\dotsm \otimes v_n)g
    = v_{1g^{-1}}g\otimes \dotsm \otimes v_{ng^{-1}}g
    \quad(v_i\in V_{Hg_i}).
\]
It is easy to see that this extends to a well defined action
of $G$ on the tensor induced module $V^{\otimes G}$, 
and that this action makes 
$V^{\otimes G}$ a module over
the skew group ring $RG$.

A word on notation: The isomorphism class of
$V^{\otimes G}$ depends, of course, on the subgroup $H$
and the ring $R$.
If we want to emphasize the dependence on the 
subgroup $H$, we write $(V_H)^{\otimes G}$.
If we want to emphasize the dependence on the ring
$R$, we write $V^{\otimes_R G}$.
Both notations can be combined.

We mention in passing that the definition of tensor induction
given here is independent of the choice of a set of 
coset representatives. 
Such a definition was asked for by Kovács~\cite{kovacs90}
and Pacifici~\cite{pacifici05}.
Our definition (for trivial action of $G$ on $R$)
appears in a paper of 
Knörr~\cite[Def. 10, attributed to the referee]{knoerr07}.
The usual definition~\cite[\S~13]{CRMRT1} 
yields an isomorphic module, as is not difficult to see.

The tensor induced module can be characterized by a universal property:
\begin{thm}\label{t:tensorinduniv}
  Let $H\leq G$ be groups and
  $V$ an $RH$\nbd module.
  Let $\tau\colon V^{G}\to V^{\otimes G}$
  be the canonical multilinear map.
  Let $W$ be an $RG$\nbd module and
  $\beta\colon V^{ G }\to W$ a map with the following
  properties:
  \begin{enumthm}
  \item $\beta$ is a map of $G$\nbd sets.
  \item $\beta$ is $R$\nbd multilinear, when
        $V^{G}$ is considered as direct product
        of the modules $V_{\omega}$
        ($\omega\in \Omega=\{Hg\mid g\in G\}$).
  \end{enumthm}
  Then there is a unique
    $RG$\nbd module homomorphism
    $\phi\colon V^{\otimes G}\to W$ making the diagram
    \[ \begin{tikzcd}
         V^{G} \arrow{r}{\tau} 
             \arrow{rd}{\beta}
          & V^{\otimes G} \arrow{d}{\phi}
          \\
          & W
       \end{tikzcd}
    \]
    commutative.
    The pair $(\tau, V^{\otimes G})$ is determined
    uniquely up to unique isomorphism by the fact that this
    holds for all maps $\beta\colon V^{G }\to W$
    as above.
\end{thm}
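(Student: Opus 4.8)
The plan is to reduce the statement to the ordinary universal property of the tensor product over $R$, and then to promote the resulting $R$\nbd linear map to an $RG$\nbd linear one using the equivariance hypothesis~(i). As a preliminary step I would check that the canonical map $\tau$ itself is a map of $G$\nbd sets (it is $R$\nbd multilinear by construction). Writing an element of $V^{G}$ as a tuple $(v_{\omega})_{\omega\in\Omega}$ in $\prod_{\omega}V_{\omega}$, the coset action sends it to the tuple whose $\omega$\nbd component is $v_{\omega g^{-1}}g\in V_{\omega}$; applying $\tau$ gives $\bigotimes_{\omega}v_{\omega g^{-1}}g$, which is exactly the defining formula for the action of $g$ on $\bigotimes_{\omega}v_{\omega}=\tau((v_{\omega})_{\omega})$. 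Hence $\tau(vg)=\tau(v)g$.

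For existence, I would temporarily forget the $G$\nbd action and regard $W$ as an $R$\nbd module. Since $\beta$ is $R$\nbd multilinear on the direct product $\prod_{\omega}V_{\omega}$, the universal property of the tensor product $V^{\otimes G}=\bigotimes_{\omega}V_{\omega}$ over $R$ furnishes a unique $R$\nbd linear map $\phi\colon V^{\otimes G}\to W$ with $\phi\tau=\beta$. The crux is then to verify that $\phi$ respects the (semilinear) $G$\nbd action. On a pure tensor this is forced by the equivariance of $\tau$ and $\beta$:
\[
   \phi(\tau(v)g)=\phi(\tau(vg))=\beta(vg)=\beta(v)g=\phi(\tau(v))g .
\]
Because the pure tensors generate $V^{\otimes G}$ as an abelian group and both $\phi$ and right multiplication by $g$ are additive, $\phi(xg)=\phi(x)g$ holds for all $x\in V^{\otimes G}$. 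Combined with $R$\nbd linearity, $\phi$ respects multiplication by every element $gr$ ($g\in G$, $r\in R$), and these generate $RG$ additively, so $\phi$ is an $RG$\nbd homomorphism. Uniqueness of $\phi$ is immediate, since its values on the generating pure tensors are dictated by $\phi\tau=\beta$.

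The step I expect to require the most care is this passage from an $R$\nbd linear to an $RG$\nbd linear map: the factorisation $\phi$ comes out of a tensor product universal property that is blind to $G$, and the $G$\nbd action on $V^{\otimes G}$ is only semilinear over $R$, so equivariance has to be checked directly. Everything there hinges on the bookkeeping of the coset action matching up, i.e.\ on the preliminary fact that $\tau$ is a $G$\nbd map; once that is secured the equivariance of $\phi$ propagates from pure tensors to all of $V^{\otimes G}$ by additivity.

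The final clause, uniqueness of $(\tau,V^{\otimes G})$ up to unique isomorphism, is the usual Yoneda argument and needs no new ideas. The universal property says that $\phi\mapsto\phi\tau$ is a bijection from $\Hom_{RG}(V^{\otimes G},W)$ onto the set of maps $\beta$ obeying (i) and (ii), naturally in $W$. Given a second pair $(\tau',U)$ with the same property, applying each universal property to the comparison map of the other produces mutually inverse $RG$\nbd homomorphisms between $V^{\otimes G}$ and $U$ carrying $\tau$ to $\tau'$ and back; the uniqueness clauses force the two composites to be the identity maps, whence the comparison isomorphism exists and is unique.
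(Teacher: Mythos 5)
Your proposal is correct and takes essentially the same route as the paper's own proof: obtain the unique $R$\nbd linear $\phi$ from the universal property of the tensor product $\bigotimes_{\omega\in\Omega}V_{\omega}$, verify $G$\nbd equivariance on pure tensors using that $\tau$ is a $G$\nbd map by definition and $\beta$ by hypothesis, and settle the uniqueness of $(\tau, V^{\otimes G})$ by the standard abstract-nonsense argument. Your write-up simply makes explicit the steps the paper leaves to the reader (the check that $\tau$ is $G$\nbd equivariant, the extension from pure tensors by additivity, and the passage from $R$\nbd and $G$\nbd compatibility to $RG$\nbd linearity), all of which are carried out correctly.
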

\begin{proof}
  The uniqueness of $(\tau, V^{\otimes G})$ will follow
  from the usual abstract nonsense argument, once we have 
  proved existence and uniqueness of $\phi$.
  
  By the universal property of the tensor product
  $\bigotimes_{\omega\in \Omega} V_{\omega}$,
  there exists a unique $R$\nbd module homomorphism
  $\phi\colon V^{\otimes G}\to W$ making the diagram commutative,
  and we need to show that $\phi$ commutes with the action
  of $G$.
  This is true on the pure tensors in $V^{\otimes G}$,
  since it is true for $\tau$ (by definition)
  and $\beta$ (by assumption).
  From this, the assertion follows.  
\end{proof}

The next result shows that $(\phantom{I})^{\otimes G}$
is a functor from the category of $RH$\nbd modules to
the category of $RG$\nbd modules.
\begin{prop}\label{p:ti_func}
  Let $\phi\colon V\to W$ be a homomorphism of
  $RH$\nbd modules.
  Then there is a unique $RG$\nbd module homomorphism
  $ \phi^{\otimes G}\colon V^{\otimes G}\to W^{\otimes G}$,
  such that the diagram
  \[ \begin{tikzcd}
        V^G \rar{\phi\otimes 1}
            \dar 
       &W^G \dar \\
       V^{\otimes G} \rar{\phi^{\otimes G}} 
       & W^{\otimes G}
     \end{tikzcd}
  \]
  is commutative. (Here $V^G=V\otimes_{RH}RG$.)
  If 
  $\psi \colon W\to U$
  is another $RH$\nbd module homomorphism,
  then 
  $(\phi\psi)^{\otimes G}=\phi^{\otimes G}\psi^{\otimes G}$.   
\end{prop}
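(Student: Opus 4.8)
The plan is to deduce everything from the universal property of Theorem~\ref{t:tensorinduniv}, so that no explicit computation with pure tensors is needed. Write $\tau_V\colon V^G\to V^{\otimes G}$ and $\tau_W\colon W^G\to W^{\otimes G}$ for the canonical multilinear maps, and let $\phi\otimes 1\colon V^G\to W^G$ be the map induced on $V^G=V\otimes_{RH}RG$ by $\phi$ (ordinary induction of module homomorphisms). I compose maps in the order in which they are applied, as in the statement of the proposition. To produce $\phi^{\otimes G}$ I would apply Theorem~\ref{t:tensorinduniv} with target $W^{\otimes G}$ and with the composite
\[
  \beta := (\phi\otimes 1)\,\tau_W\colon V^G\longrightarrow W^{\otimes G}
\]
playing the role of the map called $\beta$ there.

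The key step is to verify that this $\beta$ satisfies the two hypotheses of Theorem~\ref{t:tensorinduniv}. For~(i), the induced map $\phi\otimes 1$ is $RG$\nbd linear, hence in particular a map of $G$\nbd sets, and $\tau_W$ is a map of $G$\nbd sets by construction; a composite of $G$\nbd set maps is again one. For~(ii), note that $\phi\otimes 1$ carries $V_\omega=V\otimes g$ into $W_\omega=W\otimes g$ by applying $\phi$ in the first tensor factor, and it does so $R$\nbd linearly in each coset component separately; since $\tau_W$ is $R$\nbd multilinear with respect to $W^G=\bigoplus_{\omega\in\Omega}W_\omega$, the composite $\beta$ is $R$\nbd multilinear with respect to $V^G=\bigoplus_{\omega\in\Omega}V_\omega$. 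Theorem~\ref{t:tensorinduniv} then produces a unique $RG$\nbd module homomorphism $\phi^{\otimes G}\colon V^{\otimes G}\to W^{\otimes G}$ with $\tau_V\,\phi^{\otimes G}=\beta=(\phi\otimes 1)\,\tau_W$, which is exactly the commutativity of the square; the uniqueness is part of the theorem.

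For functoriality I would exploit this same uniqueness rather than recompute on pure tensors. Both $(\phi\psi)^{\otimes G}$ and $\phi^{\otimes G}\psi^{\otimes G}$ are $RG$\nbd homomorphisms $V^{\otimes G}\to U^{\otimes G}$, so by Theorem~\ref{t:tensorinduniv} it suffices to check that the second satisfies the property characterizing the first, namely $\tau_V\,(\phi\psi)^{\otimes G}=\bigl((\phi\psi)\otimes 1\bigr)\,\tau_U$. Using the two commutative squares already obtained together with the functoriality of ordinary induction, $(\phi\otimes 1)(\psi\otimes 1)=(\phi\psi)\otimes 1$, one finds
\[
  \tau_V\,\phi^{\otimes G}\,\psi^{\otimes G}
    =(\phi\otimes 1)\,\tau_W\,\psi^{\otimes G}
    =(\phi\otimes 1)(\psi\otimes 1)\,\tau_U
    =\bigl((\phi\psi)\otimes 1\bigr)\,\tau_U,
\]
so that $\phi^{\otimes G}\psi^{\otimes G}=(\phi\psi)^{\otimes G}$. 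I expect the only genuine work to lie in hypothesis~(ii)---confirming that $\phi\otimes 1$ respects the decomposition $V^G=\bigoplus_{\omega}V_\omega$ componentwise and $R$\nbd linearly, so that the composite is truly multilinear---while the $G$\nbd equivariance in~(i) and the functoriality step are purely formal.
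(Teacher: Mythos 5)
Your proposal is correct and takes essentially the same route as the paper: the paper likewise obtains $\phi^{\otimes G}$ by applying Theorem~\ref{t:tensorinduniv} to the composite $\beta = (\phi\otimes 1)\tau_W\colon V^G\to W^{\otimes G}$, and proves $(\phi\psi)^{\otimes G}=\phi^{\otimes G}\psi^{\otimes G}$ by exactly your uniqueness argument on the two composed commutative squares. Your explicit check of hypotheses (i) and (ii) merely spells out steps the paper leaves implicit, and it is accurate.
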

\begin{proof}
  The first assertion follows from 
  Theorem~\ref{t:tensorinduniv}, 
  applied to the $\beta$ in
  \[ \begin{tikzcd} 
       V^G \rar{\phi\otimes 1}
           \arrow[bend right]{rr}{\beta}  
           & W^G  \rar 
           & W^{\otimes G}.
     \end{tikzcd}
  \]
  Of course, one can define $\phi^{\otimes G}$
  directly: Let $G= \bigdcup_{t\in T}Ht$ and
  $v_t\in V$.
  Then define
  \[ \left( \bigotimes_{t\in T} (v_t\otimes t)
     \right) \phi^{\otimes G}
     = \bigotimes_{t\in T}(v_t\phi \otimes t)
     \in \bigotimes_{t\in T} (W\otimes t)
     = W^{\otimes G}.
  \]
  The second assertion follows from the uniqueness
  of $\phi^{\otimes G}$, $\psi^{\otimes G}$
  and $(\phi\psi)^{\otimes G}$ in
  \[ \begin{tikzcd}
       V^G \rar{\phi\otimes 1}
           \dar 
      &W^G \rar{\psi\otimes 1}
           \dar
      &U^G \dar \\
       V^{\otimes G} \rar{\phi^{\otimes G}} 
           \arrow[bend right]{rr}[swap]{(\phi\psi)^{\otimes G}}
      &W^{\otimes G} \rar{\psi^{\otimes G}}
      &U^{\otimes G}\rlap{.}
    \end{tikzcd}
  \]
\end{proof}
\begin{remark}
  While the involved categories are 
  \emph{abelian} categories, the functor
  $(\phantom{I})^{\otimes G}$ is not 
  additive:
  In general
  $(\phi_1+\phi_2)^{\otimes G}
    \neq \phi_1^{\otimes G} + \phi_2^{\otimes G}$.
\end{remark}

\section{Corestriction of \texorpdfstring{$G$}{G}-algebras}
As before, let $R$ be a commutative $G$\nbd ring.
A \defemph{$G$\nbd algebra over $R$}
is a $G$\nbd ring $A$, which is at the same time
an $R$\nbd algebra and such that 
$A$ is an $R$\nbd module with compatible $G$\nbd action.
In other words, we have
$(ar)^g= a^gr^g$ for all $a\in A$, $r\in R$ and $g\in G$.
This can also be expressed by saying that the algebra unit
$R\to \Z(A)$ is a homomorphism of $G$\nbd rings.
Note that now we are using exponential notation
for the action of $G$ on $A$,
although we can view $A$ as $RG$\nbd module.

Let $H\leq G$.
Given an $H$-algebra $A$ over $R$, we may form
the tensor induced module
$A^{\otimes G}$.
Let $\Omega=\{Hg\mid g\in G\}$.
Then $A^{\otimes G}$ is a tensor product
of modules $A_{\omega}$, where
$A_{\omega}=A\otimes g\subseteq A\otimes_{RH}RG$ for $\omega=Hg$.
Since we prefer to use exponential notation in the algebra 
situation,
we write $A^{\otimes g}$ instead of 
$A\otimes g$ and
$a^{\otimes g}$ instead of $a\otimes g$ from now on.
Each $A_{\omega}=A^{\otimes g}$ is in fact an algebra in its own right,
via $(a^{\otimes g})(b^{\otimes g})=(ab)^{\otimes g}$.

For each $g\in G$ and $\omega = Ht\in \Omega$, we have
a ring isomorphism $A_{\omega}\to A_{\omega g}$
sending $a^{\otimes t}$ to 
$(a^{\otimes t})^g =a^{\otimes t g}$.
This is not an $R$\nbd algebra homomorphism, but we have
$(ra_{\omega})^g = r^ga_{\omega}^g$.
As $R$\nbd algebra, 
$A^{\otimes g} $ is a 
$g$\nbd conjugate of $A$.

Since every $A_{\omega}$ is an $R$\nbd algebra, 
the tensor induced module
$A^{\otimes G}$ is an $R$\nbd algebra.
The action of $G$ on $A^{\otimes G}$ makes
$A^{\otimes G}$ a $G$\nbd algebra over $R$.
\begin{defi}\label{d:cores}
  Let $A$ be an $H$-algebra over $R$.
  The \defemph{corestriction} of $A$ in $G$ is
  the $G$\nbd algebra $A^{\otimes G}$.
  We also write $\Cores(A)$ or 
  $\Cores_H^G(A)$ to denote this algebra.
\end{defi}
\begin{remark}
  This definition must not be confused with the algebra-theoretic
  definition of the corestriction map from the Brauer group
  of a field to the Brauer group of a 
  subfield~\cite{Riehm70,draxlskewfields,Tignol87}.
  Of course, the two definitions are closely related.
  Let $L/F$ be a finite separable field extension and
  let $E\supseteq L$ be a field such that
  $E/F$ is Galois.
  Let $G=\Gal(E/F)$ and $H=\Gal(E/L)$.
  Then $\abs{G:H}=\abs{L:F}$ is finite.
  Let $A$ be an algebra over the field $L$.
  Then $A\otimes_L E$ becomes an $H$\nbd algebra
  over the $H$\nbd field $E$ by defining
  $(a\otimes \lambda)^h = a\otimes \lambda^h$.
  Let
  $B= (A\otimes_L E)^{\otimes G}$ in the sense of
  Definition~\ref{d:cores}.
  To get the corestriction map that is used in the theory
  of the Brauer group,
  one has to take $\C_B(G)$, the centralizer of $G$ in $B$.
  This is an algebra over the field $F$.
  Its isomorphism class does not depend on the choice of the
  Galois extension $E$.
  In particular, if $A$ is central simple over $L$,
  then $\C_B(G)$ is central simple over $F$.
  This defines a map 
  $\Br(L)\to \Br(F)$, also called corestriction, and denoted
  by $\operatorname{cor}_{L/F}$.
  However, in this paper we will only deal with
  the corestriction of Definition~\ref{d:cores}.
\end{remark}
Let $\Omega=\{Hg\mid g\in G\}$, 
and let $\mu_{\omega}\colon A_{\omega}\to A^{\otimes G}$ be 
the canonical
algebra homomorphism into $A^{\otimes G}$, so
\[ a_{\omega}\mu_{\omega} = 
            1\otimes \dotsm  \otimes 1
              \otimes a_{\omega} \otimes
              1 \otimes \dotsm \otimes 1,\]
where $a_{\omega}$ occurs at position $\omega$, of course.
The action of $G$ on $A^{\otimes G}$
 is uniquely determined by the 
property
$(a_{\omega}\mu_{\omega})^g= a_{\omega}^{g}\mu_{\omega g}$
for all $\omega\in \Omega$, $a_{\omega}\in A_{\omega}$, 
and $g\in G$.
This, in turn, follows from 
$(a\mu_H)^g = a^{\otimes g}\mu_{Hg}$ for all $g\in G$
and $a\in A$
and the $G$-action property:
To see this, note that for 
$\omega= Ht$ and $a_{\omega}=a^{\otimes t}$, we
have
\[ (a_{\omega}\mu_{\omega})^g
     = (a^{\otimes t}\mu_{Ht})^g
     = \big((a\mu_H)^t\big)^g
     = (a\mu_H)^{tg}
     = a^{\otimes tg}\mu_{Htg}
     =a_{\omega}^{g}\mu_{\omega g}.
\]
We have thus proved the following theorem
\cite[cf.][Theorem~4]{Riehm70}:
\begin{thm}\label{t:cores}
  Assume the notation introduced above.
  Then  the action of $G$ on $A^{\otimes G}$
  is uniquely determined by the fact that the diagram
  \[ \begin{tikzcd}
       A \arrow{r}{()^{\otimes g}} \arrow{d}{\mu_H} 
        & A_{Hg} \arrow{d}{\mu_{Hg}}
        \\
       A^{\otimes G} \arrow{r}{()^g} 
       &A^{\otimes G}
      \end{tikzcd}
  \]
  is commutative for all $g\in G$.
\end{thm}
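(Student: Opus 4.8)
The plan is to prove the \emph{uniqueness} assertion, since the existence of a $G$-action making the displayed square commute is exactly the action constructed just above the theorem. The whole argument rests on the observation that the subalgebras $\mu_\omega(A_\omega)$, $\omega\in\Omega$, generate $A^{\otimes G}$ as an $R$-algebra: every pure tensor $\bigotimes_{\omega\in\Omega} a_\omega$ equals the product $\prod_{\omega\in\Omega}(a_\omega\mu_\omega)$ of the corresponding images (the factors commute, as they sit in distinct tensor slots), and the pure tensors span $A^{\otimes G}$ over $R$. Because the action is required to make $A^{\otimes G}$ a $G$-algebra over the fixed $G$-ring $R$, its effect on the image of $R$ is already prescribed; thus any two such actions that agree on each $\mu_\omega(A_\omega)$ agree on all products and all $R$-linear combinations, hence everywhere.

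So first I would reduce the problem to computing, for any candidate action satisfying the diagram, the element $(a_\omega\mu_\omega)^g$ for every $\omega\in\Omega$, $a_\omega\in A_\omega$ and $g\in G$. The commutativity of the square is precisely the identity $(a\mu_H)^g = a^{\otimes g}\mu_{Hg}$ for all $a\in A$, i.e.\ it fixes the action on the generators coming from the trivial coset $\omega=H$.

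Next I would bootstrap from the coset $H$ to an arbitrary coset $\omega=Ht$. Writing $a_\omega=a^{\otimes t}$ with $a\in A$, I would use the square (with $g$ replaced by $t$) to rewrite $a_\omega\mu_\omega = a^{\otimes t}\mu_{Ht} = (a\mu_H)^t$, and then apply the group-action axiom $(x^t)^g = x^{tg}$ together with the square once more (now with $g$ replaced by $tg$). This yields
\[
  (a_\omega\mu_\omega)^g
   = \bigl((a\mu_H)^t\bigr)^g
   = (a\mu_H)^{tg}
   = a^{\otimes tg}\mu_{Htg}
   = a_\omega^{\,g}\mu_{\omega g},
\]
so that $g$ acts on \emph{every} generator $\mu_\omega(A_\omega)$ in a manner forced by the diagram. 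Combined with the first paragraph, this pins the action down uniquely and completes the proof.

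I expect the only delicate point to be the reduction carried out in the third step: it works because each element of $A_\omega$ for $\omega=Ht$ has the form $a^{\otimes t}$ and therefore lies in the $G$-orbit of $\mu_H(A)$, which is exactly what lets the group-action axiom transport the known $\omega=H$ behaviour to all cosets. One should also note that this transport does not depend on the chosen representative $t$ of $\omega$, in keeping with the coset-invariance of $A_\omega$ recorded earlier.
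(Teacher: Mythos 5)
Your proposal is correct and is essentially the paper's own argument: the paper establishes uniqueness by exactly the same chain of equalities $(a_{\omega}\mu_{\omega})^g = \big((a\mu_H)^t\big)^g = (a\mu_H)^{tg} = a^{\otimes tg}\mu_{Htg} = a_{\omega}^{g}\mu_{\omega g}$, bootstrapping from the trivial coset via the group-action axiom. Your preliminary paragraph merely makes explicit what the paper leaves implicit, namely that the images $\mu_{\omega}(A_{\omega})$ generate $A^{\otimes G}$ as an $R$-algebra, so that agreement on these generators (together with $R$-semilinearity of the action) pins the action down everywhere.
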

Moreover, we have~\cite[Theorem~4]{Riehm70}:
\begin{thm}\label{t:universal}
  The homomorphism
  $\mu=\mu_H\colon A\to A^{\otimes G}$ 
  is a homomorphism of $H$\nbd algebras,
  and has the following universal property:
  Whenever $\psi\colon A\to B$
  is an $H$-algebra homomorphism of
  $A$ into a $G$\nbd algebra $B$ such that
  $A\psi$ and $(A\psi)^g$ commute for all
  $g\in G\setminus H$, then there is a unique
  $G$-algebra homomorphism
  $\alpha\colon A^{\otimes G}\to B$ making the diagram
  \[ \begin{tikzcd}
        A \arrow{r}{\mu} 
          \arrow{rd}[swap]{\psi} 
          & A^{\otimes G} \arrow{d}{\alpha}
          \\
          & B          
     \end{tikzcd}
  \]
  commutative.  
\end{thm}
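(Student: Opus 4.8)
The plan is to verify the two assertions in turn, keeping Theorem~\ref{t:cores} as the tool that controls the $G$\nbd action on $A^{\otimes G}$. The first assertion is quick: $\mu_H$ is an $R$\nbd algebra homomorphism because it is the canonical embedding of the factor $A_H$ into the tensor product $A^{\otimes G}=\bigotimes_{\omega}A_{\omega}$. For $H$\nbd equivariance, fix $h\in H$; since $Hh=H$, Theorem~\ref{t:cores} gives $(a\mu_H)^h=a^{\otimes h}\mu_{Hh}=a^{\otimes h}\mu_H$, and tensoring over $RH$ identifies $a^{\otimes h}$ with $a^h$ inside the factor $A_H$, so $(a\mu_H)^h=a^h\mu_H$, as required.

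For the universal property I would build $\alpha$ factor by factor. Fix a transversal $T$ of the right cosets of $H$ with $1\in T$, so that $G=\bigdcup_{t\in T}Ht$ and $\omega=Ht$ determines the factor $A_{\omega}=A^{\otimes t}$. For each $\omega$ define $\psi_{\omega}\colon A_{\omega}\to B$ by $a^{\otimes t}\psi_{\omega}=(a\psi)^t$. The first point to check is that $\psi_{\omega}$ is independent of the chosen representative $t$: replacing $t$ by $ht$ with $h\in H$ turns $a^{\otimes t}$ into $(a^h)^{\otimes t}$, while $(a\psi)^{ht}$ becomes $((a\psi)^h)^t$, and these agree precisely because $\psi$ is $H$\nbd equivariant. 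A short computation using $R$\nbd linearity of $\psi$ together with the $G$\nbd algebra structure of $B$ (so that $(rb)^g=r^gb^g$) then shows each $\psi_{\omega}$ is an $R$\nbd algebra homomorphism.

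Next I would check that the images $A_{\omega}\psi_{\omega}=(A\psi)^t$ commute pairwise. For distinct cosets $\omega=Ht$ and $\omega'=Ht'$ we have $t't^{-1}\notin H$, so the hypothesis applied to $g=t't^{-1}\in G\setminus H$ says that $A\psi$ and $(A\psi)^{t't^{-1}}$ commute; conjugating by $t$ gives that $(A\psi)^t$ and $(A\psi)^{t'}$ commute. The universal property of the tensor product of $R$\nbd algebras now yields a unique $R$\nbd algebra homomorphism $\alpha\colon A^{\otimes G}\to B$ with $\mu_{\omega}\alpha=\psi_{\omega}$ for all $\omega$; taking $\omega=H$ gives $\mu_H\alpha=\psi$, so the triangle commutes. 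It remains to see that $\alpha$ is $G$\nbd equivariant, and here Theorem~\ref{t:cores} does the work on generators: writing $a_{\omega}=a^{\otimes t}$ and using $(a_{\omega}\mu_{\omega})^g=a_{\omega}^g\mu_{\omega g}$, both $(a_{\omega}\mu_{\omega})^g\alpha$ and $\big((a_{\omega}\mu_{\omega})\alpha\big)^g$ reduce to $(a\psi)^{tg}$. Since the $\mu_{\omega}(A_{\omega})$ together with $R$ generate $A^{\otimes G}$ as a ring and both $x\mapsto(x^g)\alpha$ and $x\mapsto(x\alpha)^g$ are ring homomorphisms agreeing on these generators, equivariance follows; the same computation read backwards forces any $G$\nbd algebra homomorphism $\alpha'$ with $\mu_H\alpha'=\psi$ to satisfy $(a^{\otimes t}\mu_{Ht})\alpha'=(a\psi)^t$ on generators, which gives uniqueness.

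I expect the main obstacle to be the well-definedness of the factor maps $\psi_{\omega}$ together with the verification that their images commute exactly under the stated hypothesis. This is the one place where several ingredients must interlock: the coset-representative independence of the construction, the $H$\nbd equivariance of $\psi$, and the precise commuting condition imposed for all $g\in G\setminus H$. Once the $\psi_{\omega}$ are in hand, everything else is the universal property of the tensor product plus a routine check on generators through Theorem~\ref{t:cores}.
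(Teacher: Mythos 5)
Your proof is correct and takes essentially the same route as the paper: the forced values on pure tensors, $\bigotimes_{t\in T} a_t^{\otimes t}\mapsto \prod_{t\in T}(a_t\psi)^t$, give uniqueness, and the paper leaves existence as a check that this formula defines a $G$\nbd algebra homomorphism. Your factor-by-factor construction of the maps $\psi_{\omega}$ (well-defined by $H$\nbd equivariance of $\psi$), the commutation of their images via the hypothesis applied to $g=t't^{-1}\in G\setminus H$, and the appeal to the universal property of the tensor product of algebras with commuting images is exactly the natural way to carry out that omitted verification, so no gap remains.
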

\begin{proof}
  That $\mu$ is a homomorphism of $H$-algebras
  follows from the commutativity of the diagram
  in Theorem~\ref{t:cores} for $g\in H$.
  
  Assume $\alpha\colon A^{\otimes G}\to B$
  as in the theorem exists.
  Let $\omega=Ht\in \Omega$ and
  $a_{\omega}= a^{\otimes t}\in A_{\omega}$.
  Then $a_{\omega}\mu_{\omega} = (a\mu)^t$.
  Thus we necessarily
  have 
  $a_{\omega}\mu_{\omega}\alpha= ((a\mu)^t)\alpha = (a\mu\alpha)^t
   = (a\psi)^t$.   
  Let $G=\bigdcup_{t\in T}Ht$ and $a_t\in A$.
  Then $\alpha$ as in the theorem must send
  \[ \bigotimes_{t\in T} a_t^{\otimes t}
     = \prod_{t\in T}(a_t\mu)^t
  \qquad\text{to} \qquad
   \prod_{t\in T} (a_t\psi)^t.\]
  Now check that this indeed defines an algebra homomorphism.  
\end{proof}
Given a homomorphism
$\alpha\colon A\to B$ of $H$\nbd algebras over $R$,
the homomorphism
$\alpha^{\otimes G}\colon 
 A^{\otimes G}\to B^{\otimes G}$
defined in Proposition~\ref{p:ti_func}
is a $G$-algebra homomorphism.
We denote it also by 
$\Cores(\alpha)$.
In the algebra case, we can characterize 
$\alpha^{\otimes G}$ more elegantly by:
\begin{prop}\label{p:funccores}
  Let $\alpha\colon A\to B$ a homomorphism
  of $H$\nbd algebras over $R$.
  Then there is a unique homomorphism
  $\alpha^{\otimes G}\colon A^{\otimes G}\to B^{\otimes G}$
  of $G$\nbd algebras over $R$ making 
  \[\begin{tikzcd}
      A \dar{\mu}\rar{\alpha} 
       & B \dar{\mu}
       \\
      A^{\otimes G} \rar{\alpha^{\otimes G}}
      & B^{\otimes G}
  \end{tikzcd}
  \]
  commutative.
\end{prop}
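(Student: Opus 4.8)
The plan is to deduce this from the universal property of tensor induction recorded in Theorem~\ref{t:universal}, so that no explicit formula for $\alpha^{\otimes G}$ is needed. Write $\mu_A\colon A\to A^{\otimes G}$ and $\mu_B\colon B\to B^{\otimes G}$ for the two canonical maps (both denoted $\mu$ in the statement), and set $\psi=\alpha\mu_B\colon A\to B^{\otimes G}$. Since $\alpha$ is a homomorphism of $H$\nbd algebras by hypothesis and $\mu_B$ is one by Theorem~\ref{t:universal}, their composite $\psi$ is again a homomorphism of $H$\nbd algebras, now from $A$ into the $G$\nbd algebra $B^{\otimes G}$. I would then feed $\psi$ into Theorem~\ref{t:universal} and let $\alpha^{\otimes G}$ be the resulting $G$\nbd algebra homomorphism $A^{\otimes G}\to B^{\otimes G}$; its defining property $\mu_A\,\alpha^{\otimes G}=\psi=\alpha\mu_B$ is exactly the commutativity of the square, and the uniqueness assertion is inherited verbatim from Theorem~\ref{t:universal}.

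The one hypothesis of Theorem~\ref{t:universal} that is not automatic is the commutation condition, so the work lies in checking that $A\psi$ and $(A\psi)^g$ commute elementwise for every $g\in G\setminus H$. This is where the tensor\nbd factor structure does everything. By construction $A\psi=A\alpha\mu_B\subseteq B\mu_B\subseteq B_H$; that is, the image of $\psi$ lies inside the single tensor factor indexed by the trivial coset $\omega=H$. Applying $g$ and using the $G$\nbd action property $(B_H)^g=B_{Hg}$ from Theorem~\ref{t:cores}, I obtain $(A\psi)^g\subseteq B_{Hg}$. For $g\notin H$ the cosets $H$ and $Hg$ are distinct, so $B_H$ and $B_{Hg}$ are different factors of the tensor product $B^{\otimes G}=\bigotimes_{\omega\in\Omega}B_{\omega}$, and elements lying in distinct factors of a tensor product of $R$\nbd algebras commute. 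Hence $A\psi$ and $(A\psi)^g$ commute, as required.

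With this verified, Theorem~\ref{t:universal} supplies both the existence and the uniqueness of $\alpha^{\otimes G}$, which finishes the argument. I expect the only delicate point to be the bookkeeping in the commutation condition, namely tracking that $\mu_B$ lands in the $H$\nbd factor and that conjugation by $g$ transports this factor to the $Hg$\nbd factor, rather than any genuine difficulty. Finally, I would remark that the map obtained here coincides with the homomorphism $\alpha^{\otimes G}$ of Proposition~\ref{p:ti_func}: writing $G=\bigdcup_{t\in T}Ht$, a pure tensor $\bigotimes_{t\in T}a_t^{\otimes t}=\prod_{t\in T}(a_t\mu_A)^t$ is sent by both descriptions to $\bigotimes_{t\in T}(a_t\alpha)^{\otimes t}$, so the elegant characterization proved here is consistent with the earlier module\nbd theoretic one.
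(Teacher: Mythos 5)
Your proof is correct and takes exactly the route the paper intends: the paper's entire proof of Proposition~\ref{p:funccores} is the single line that it ``follows immediately from Theorem~\ref{t:universal}'', and your application of that theorem to $\psi=\alpha\mu_B$ --- including the check that $A\psi$ lands in the tensor factor indexed by the coset $H$ while $(A\psi)^g$ lands in the distinct factor indexed by $Hg$ for $g\notin H$, so the two commute --- is precisely the verification the paper leaves implicit. (Only a notational nit: $B\mu_B$ is the \emph{image} of the factor $B_H$ inside $B^{\otimes G}$ under $\mu_H$, not a subset of $B_H$ itself, but your intended meaning is clear and the argument stands.)
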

\begin{proof}
  This follows immediately from Theorem~\ref{t:universal}.
\end{proof}

The next two results are of course also 
true for modules instead of algebras.

\begin{prop}\label{p:tiprod}
  Let $A$ and $B$ be $H$-algebras over $R$.
  Then 
  \[ A^{\otimes G} \otimes_{R} B^{\otimes G}
     \iso (A\otimes_R B)^{\otimes G}
  \]
  as $G$\nbd algebras over $R$.
  This is a natural equivalence of functors.
\end{prop}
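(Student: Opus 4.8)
The plan is to realize the desired isomorphism as the unique $G$\nbd algebra homomorphism produced by the universal property of corestriction (Theorem~\ref{t:universal}), and then to exhibit an explicit inverse via functoriality. Write $\mu_A\colon A\to A^{\otimes G}$, $\mu_B\colon B\to B^{\otimes G}$ and $\mu_{A\otimes B}\colon A\otimes_R B\to (A\otimes_R B)^{\otimes G}$ for the canonical $H$\nbd algebra homomorphisms of Theorem~\ref{t:universal}. Equip $C:=A^{\otimes G}\otimes_R B^{\otimes G}$ with the diagonal $G$\nbd action $(x\otimes y)^g=x^g\otimes y^g$, which makes it a $G$\nbd algebra over $R$. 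First I would define
\[ \psi\colon A\otimes_R B\to C,\qquad (a\otimes b)\psi= a\mu_A\otimes b\mu_B, \]
which is an $H$\nbd algebra homomorphism, being the tensor product of the two $H$\nbd algebra homomorphisms $\mu_A$ and $\mu_B$.

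To apply Theorem~\ref{t:universal} I must check that $(A\otimes_R B)\psi$ and its $g$\nbd conjugates commute for $g\in G\setminus H$. This is the crucial point. It suffices to check on algebra generators $a\otimes b$ and $a'\otimes b'$. Using the identity $(a\mu_A)^g= a^{\otimes g}\mu_{Hg}$ recorded before Theorem~\ref{t:cores}, the conjugate $((a'\otimes b')\psi)^g$ equals $(a'^{\otimes g}\mu_{Hg})\otimes (b'^{\otimes g}\mu_{Hg})$. Now in $A^{\otimes G}=\bigotimes_{\omega}A_{\omega}$ the element $a\mu_A$ is supported in the factor indexed by $H$, whereas $a'^{\otimes g}\mu_{Hg}$ is supported in the factor indexed by $Hg\neq H$; images of distinct tensor factors of an algebra commute, so these two elements commute, and likewise in $B^{\otimes G}$. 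Multiplying factorwise in $C$ shows $(a\otimes b)\psi$ and $((a'\otimes b')\psi)^g$ commute, as required. Theorem~\ref{t:universal} then yields a unique $G$\nbd algebra homomorphism $\alpha\colon (A\otimes_R B)^{\otimes G}\to C$ with $\mu_{A\otimes B}\alpha=\psi$; concretely it sends $\bigotimes_t (a_t\otimes b_t)^{\otimes t}$ to $\bigl(\bigotimes_t a_t^{\otimes t}\bigr)\otimes\bigl(\bigotimes_t b_t^{\otimes t}\bigr)$.

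For the inverse, I would apply the functor $(\phantom{I})^{\otimes G}$ (Proposition~\ref{p:funccores}) to the two structural $H$\nbd algebra homomorphisms $a\mapsto a\otimes 1$ and $b\mapsto 1\otimes b$ into $A\otimes_R B$, obtaining $G$\nbd algebra homomorphisms $A^{\otimes G}\to (A\otimes_R B)^{\otimes G}$ and $B^{\otimes G}\to (A\otimes_R B)^{\otimes G}$. Their images commute, because $a\otimes 1$ and $1\otimes b$ commute in $A\otimes_R B$ and this is preserved factorwise; hence the universal property of the tensor product of $R$\nbd algebras produces a $G$\nbd algebra homomorphism $\beta\colon C\to (A\otimes_R B)^{\otimes G}$. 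That $\alpha$ and $\beta$ are mutually inverse I would check by uniqueness: the composite $\alpha\beta$ satisfies $\mu_{A\otimes B}\alpha\beta=\mu_{A\otimes B}$, so $\alpha\beta=\mathrm{id}$ by the uniqueness clause of Theorem~\ref{t:universal}; and $\beta\alpha$ is an algebra endomorphism of $C$ fixing the generating subalgebras $A^{\otimes G}\otimes 1$ and $1\otimes B^{\otimes G}$ (a short computation on pure tensors), hence $\beta\alpha=\mathrm{id}$.

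Finally, for naturality, given $H$\nbd algebra homomorphisms $f\colon A\to A'$ and $f'\colon B\to B'$ I would show the square relating $\alpha_{A,B}$, $\alpha_{A',B'}$, $(f\otimes f')^{\otimes G}$ and $f^{\otimes G}\otimes f'^{\otimes G}$ commutes. Both composites precompose with $\mu_{A\otimes B}$ to the same homomorphism $A\otimes_R B\to A'^{\otimes G}\otimes_R B'^{\otimes G}$ (using Proposition~\ref{p:funccores} in each tensor factor), so they coincide by the uniqueness in Theorem~\ref{t:universal}. The main obstacle is the commuting condition in the second paragraph; once the observation that distinct cosets give commuting factors is in place, everything else is bookkeeping with the universal properties already established.
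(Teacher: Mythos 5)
Your proposal is correct, but it takes a genuinely different route from the paper. The paper's entire proof is a single explicit formula: the map $A^{\otimes G}\otimes_R B^{\otimes G}\to(A\otimes_R B)^{\otimes G}$ given by $\bigl(\bigotimes_{\omega}a_{\omega}\bigr)\otimes\bigl(\bigotimes_{\omega}b_{\omega}\bigr)\mapsto\bigotimes_{\omega}(a_{\omega}\otimes b_{\omega})$, with well-definedness (a reshuffling of tensor factors over $R$), multiplicativity, $G$\nbd equivariance and naturality left as routine checks. You instead build the \emph{inverse} of that map abstractly via Theorem~\ref{t:universal}, and then recover the paper's map itself as your $\beta$, assembled from $\Cores$ applied to the two inclusions $a\mapsto a\otimes 1$, $b\mapsto 1\otimes b$ together with the universal property of the tensor product of $R$\nbd algebras. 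Your verification of the hypothesis of Theorem~\ref{t:universal} is the right one and is correct: for $g\in G\setminus H$ the elements $a\mu_H$ and $(a'\mu_H)^g=a'^{\otimes g}\mu_{Hg}$ occupy the distinct tensor positions $H$ and $Hg$, hence commute factorwise, and checking on the spanning set of pure tensors suffices by bilinearity. Two small points you leave implicit and should record: the identity $\psi\beta=\mu_{A\otimes B}$ behind your claim $\mu_{A\otimes B}\alpha\beta=\mu_{A\otimes B}$ is the one-line computation $(a\mu_A)\iota_A^{\otimes G}\cdot(b\mu_B)\iota_B^{\otimes G}=\bigl((a\otimes 1)(1\otimes b)\bigr)\mu_{A\otimes B}=(a\otimes b)\mu_{A\otimes B}$ using Proposition~\ref{p:funccores}; and the tensor-product universal property only gives $\beta$ as an $R$\nbd algebra map, so note that $G$\nbd equivariance follows because $\iota_A^{\otimes G}$ and $\iota_B^{\otimes G}$ are $G$\nbd algebra homomorphisms and the action on $A^{\otimes G}\otimes_R B^{\otimes G}$ is diagonal. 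As for what each approach buys: the paper's is maximally brief but delegates all verifications (including that its formula is well defined) to the reader, while yours gets well-definedness, equivariance and naturality for free from uniqueness clauses, making the monoidality of $\Cores$ transparent at the cost of the commuting-condition check and some bookkeeping.
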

\begin{proof}
  The isomorphism is given by the map
  well-defined by 
  \[ \left( \bigotimes_{\omega\in \Omega} a_{\omega}\right)
     \otimes
     \left( \bigotimes_{\omega\in \Omega} b_{\omega}\right)  
     \mapsto
     \bigotimes_{\omega\in \Omega}(a_{\omega}\otimes b_{\omega}).
     \qedhere
  \]
\end{proof}
(This means that the 
functor $(\cdot)^{\otimes G}= \Cores$ is actually a functor
of monoidal categories.)

Let $\phi\colon R\to S$ be a homomorphism of $G$\nbd rings.
If $A$ is an $H$\nbd algebra over $R$, then
$A\otimes_R S$ is an $H$\nbd algebra over $S$.
\begin{prop}
  Let $\phi\colon R \to S$ be a homomorphism of
  $G$\nbd rings. 
  Then
  \[ (A\otimes_R S)^{\otimes_S G}
      \iso 
      (A^{\otimes_R G})\otimes_R S
  \]
  naturally.
\end{prop}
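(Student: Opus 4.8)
The plan is to establish a natural isomorphism of $G$-algebras over $S$ by exhibiting an explicit map on pure tensors and then showing it is well defined and bijective. First I would note that both sides are tensor products of modules indexed by the coset space $\Omega = \{Hg \mid g \in G\}$. On the left, $(A \otimes_R S)^{\otimes_S G}$ is the tensor product over $S$ of the $g$-conjugates $(A \otimes_R S)_\omega = (A \otimes_R S)^{\otimes g}$. On the right, $A^{\otimes_R G}$ is the tensor product over $R$ of the $A_\omega = A^{\otimes g}$, and we extend scalars by $\otimes_R S$. The candidate map should send
\[
  \left( \bigotimes_{\omega \in \Omega} (a_\omega \otimes s_\omega) \right)
  \mapsto
  \left( \bigotimes_{\omega \in \Omega} a_\omega \right) \otimes \prod_{\omega \in \Omega} s_\omega,
\]
where the tensor on the left is taken over $S$ and on the right over $R$, using that the $S$-action on each factor lets us pull the scalars $s_\omega$ out and multiply them together in $S$.

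Second, I would verify that this is well defined and $S$-multilinear in the appropriate sense, which is the kind of routine check that the universal property of tensor induction (Theorem~\ref{t:tensorinduniv}) is designed to automate. Rather than checking multilinearity by hand, the cleaner route is to apply the universal property: construct an $SG$-multilinear, $G$-equivariant map from the induced module $(A \otimes_R S)^G$ into $(A^{\otimes_R G}) \otimes_R S$ and let tensor induction produce the homomorphism $\phi$, then do the same in the reverse direction and check the composites are identities on pure tensors. The compatibility with the $G$-action is exactly the point where Theorem~\ref{t:cores} applies: it suffices to check commutativity of $\phi$ with $()^g$ on the image of the canonical map $\mu_H$, which reduces the equivariance to a single computation $(a \otimes s)^{\otimes g} \mapsto a^{\otimes g} \otimes s$ followed by transport along $g$.

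The main obstacle I expect is bookkeeping the two different base rings cleanly: the left-hand tensor product is over $S$ while the right-hand one is over $R$, and the scalars $s_\omega \in S$ must be collected coherently across all the factors without depending on a choice of coset representatives. Concretely, one must check that the assignment is insensitive to moving a scalar $r \in R$ across the tensor sign on either side, where on the left $r$ acts through $\phi$ via the conjugate $R$-structure on each $(A \otimes_R S)_\omega$ and on the right it acts on the corresponding $A_\omega$; the $g$-conjugacy twist $(a r)^{\otimes g} = (a^{\otimes g}) r^g$ means one has to confirm that the twisted actions on both sides match up factor by factor. This is precisely the compatibility built into the definition of a $G$-ring homomorphism $\phi \colon R \to S$, so once the twists are tracked correctly the map descends.

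Finally I would address naturality: given a morphism $A \to A'$ of $H$-algebras over $R$, one checks that the evident square commutes, which follows formally from the functoriality in Proposition~\ref{p:ti_func} together with the explicit description of the isomorphism on pure tensors. This last part is purely formal and should follow from the uniqueness clause in the universal property, since both composites around the square satisfy the defining property of the induced map.
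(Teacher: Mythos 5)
Your overall strategy---an explicit map on pure tensors, a factor-by-factor identification, equivariance checked on the image of $\mu_H$, naturality from uniqueness---is in outline the paper's own proof, but the explicit map you commit to is wrong at exactly the point that carries the content of the proposition. You propose to send $\bigotimes_{\omega}(a_\omega\otimes s_\omega)$ to $\left(\bigotimes_{\omega}a_\omega\right)\otimes\prod_{\omega}s_\omega$, and later you reduce equivariance to ``a single computation $(a\otimes s)^{\otimes g}\mapsto a^{\otimes g}\otimes s$.'' The scalar must be twisted by the coset representative: the correct factor-wise map, and the one the paper uses, is
\[ (a\otimes s)^{\otimes g}\;\longmapsto\; a^{\otimes g}\otimes s^{g}. \]
Without the twist the recipe is not even well defined on the tensor product over $S$: in the $g$\nbd conjugate $(A\otimes_R S)^{\otimes g}$ one has $x^{\otimes g}s=(xs^{g^{-1}})^{\otimes g}$, so if $x=(a_1\otimes 1)^{\otimes g_1}$ and $y=(a_2\otimes 1)^{\otimes g_2}$, then $xs\otimes y=x\otimes ys$ in the tensor product over $S$, yet your untwisted formula assigns these equal elements the images $(\cdots)\otimes s^{g_1^{-1}}$ and $(\cdots)\otimes s^{g_2^{-1}}$, which differ in general. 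It is also not $G$\nbd equivariant: $\bigl((a\otimes s)^{\otimes g}\bigr)^{g'}=(a\otimes s)^{\otimes gg'}$ would go to $a^{\otimes gg'}\otimes s$, whereas $\bigl(a^{\otimes g}\otimes s\bigr)^{g'}=a^{\otimes gg'}\otimes s^{g'}$. Your suggestion that the descent is ``precisely the compatibility built into the definition of a $G$\nbd ring homomorphism $\phi$'' cannot repair this; the twist must sit in the formula itself. (If by ``transport along $g$'' you meant: define the map only at the trivial coset by $a\otimes s\mapsto a\otimes s$ and then extend $G$\nbd equivariantly, that would regenerate the twist automatically---but that is not what your displayed map or your stated ``single computation'' says.)

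Once the twist is inserted, your plan collapses into the paper's two-line proof: the assignment $(a\otimes s)^{\otimes g}\mapsto a^{\otimes g}\otimes s^{g}$ is an isomorphism of $SG$\nbd modules $(A\otimes_R S)\otimes_{SH}SG\iso(A\otimes_{RH}RG)\otimes_R S$ carrying the algebra $(A\otimes_R S)^{\otimes g}$ onto $A^{\otimes g}\otimes_R S$, whence $(A\otimes_R S)^{\otimes_S G}$ is the tensor product over $S$ of the algebras $A^{\otimes t}\otimes_R S$, and the usual commutation of finite tensor products with scalar extension identifies this with $\bigl(A^{\otimes_R G}\bigr)\otimes_R S$; equivariance and naturality are then exactly the routine verifications you describe, and your use of Theorem~\ref{t:tensorinduniv} for existence and of the uniqueness clause for naturality is legitimate.
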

In other words: scalar extension and corestriction commute.
\begin{proof}
  The map
  \begin{align*} 
    (A\otimes_R S)\otimes_{SH}SG
     &\to (A  \otimes_{RH} RG)\otimes_R S
     ,\quad 
     \\
     (a\otimes s)^{ \otimes g} 
     &\mapsto
     (a^{\otimes g}) \otimes s^g,
  \end{align*}
  is an isomorphism of $SG$\nbd modules and sends the
  algebra
  $(A\otimes_R S)^{\otimes g}$ to the algebra
  $A^{\otimes g} \otimes_R S$.
  Thus 
  \[ (A\otimes_R S)^{\otimes_S G}
      \iso \sideset{}{_S}\bigotimes_{t}  (A^{\otimes t} \otimes_R S)
      \iso \left( 
                 \sideset{}{_R}\bigotimes_{t}  A^{\otimes t} 
           \right) \otimes_R S 
  \]
  as $S$\nbd algebras.
  These isomorphisms respect the action of $G$, 
  as is easily checked.
\end{proof}

\section{Review of the Brauer-Clifford group}
We recall the definition of the 
equivariant Brauer group
by Fröhlich and Wall~\cite{frohlichwall00}.
As Herman and Mitra~\cite{HermanMitra11} have pointed out,
the Brauer-Clifford group as defined by Turull~\cite{turull09} 
is a special case. 

Let $G$ be a group.
A $G$-algebra $A$ over the commutative $G$\nbd ring $R$
is called an \defemph{Azumaya $G$\nbd algebra over $R$}
if it is Azumaya over $R$ as an $R$\nbd algebra.

If $A$ and $B$ are Azumaya $G$\nbd algebras over the 
$G$\nbd ring $R$, then
$A \otimes_{R} B$ is an Azumaya $G$\nbd algebra over $R$.

The \defemph{Brauer-Clifford group} $\BrCliff(G,R)$
is the set of equivalence classes of 
Azumaya $G$\nbd algebras over $R$ under an equivalence relation
that will be defined below.
The multiplication is induced by 
the tensor product $\otimes_R$ of algebras.

Let $P$ be an $RG$\nbd module, where
$RG$ denotes, as before, the skew group ring of $G$ over $R$.
Then $\enmo_R P$ has a natural structure of a $G$\nbd algebra
over the $G$\nbd ring $R$.
The action of $G$ on $\enmo_R P$ is given by
$p \alpha^g = ((p g^{-1})\alpha )g$
for $p\in P $ and $\alpha\in \enmo_R P$.
If $P$ is a progenerator over $R$, then
$\enmo_R P$ is Azumaya over $R$.
A \defemph{trivial $G$\nbd algebra over $R$}
is an algebra of the form $\enmo_R P$, 
where $P$ is an $RG$\nbd module such that
$P_R$ is a progenerator.
Two $G$\nbd algebras $A$ and $B$ over $R$ are called
\defemph{equivalent} if there are
trivial $G$\nbd algebras $S$ and $T$ such that
$A\otimes_R S \iso A\otimes_R T$ as $G$\nbd algebras over $R$.

This definition 
yields the same equivalence classes as another definition
using equivariant Morita equivalence~\cite[Proposition~9]{HermanMitra11}. 

As mentioned before, the set of equivalence  classes
of Azumaya $G$\nbd algebras over $R$ forms 
the \defemph{equivariant Brauer group}, or, as we call it here,
the \defemph{Brauer-Clifford group}
$\BrCliff(G,R)$. 

\section{Corestriction of equivalent \texorpdfstring{$G$}{G}-algebras}
We continue to assume that $G$ is a group,
$H\leq G$ is a subgroup of finite index and
$R$ is a $G$\nbd ring.
\begin{lemma}\label{l:coresmodule}
  Let $P$ be a right $RH$\nbd module which is 
  finitely generated projective over $R$.
  Then 
  \[ \enmo_R(P^{\otimes G}) \iso (\enmo_R P)^{\otimes G}
  \]
  as $G$\nbd algebras over $R$.
\end{lemma}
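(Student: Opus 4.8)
The plan is to produce the isomorphism as a $G$-algebra homomorphism via the universal property of Theorem~\ref{t:universal}, so that $G$-equivariance is automatic, and then to recognize it as the classical isomorphism $\bigotimes_{\omega}\enmo_R(P_{\omega})\iso\enmo_R(\bigotimes_{\omega}P_{\omega})$, which is bijective precisely because each tensorand is finitely generated projective over $R$.

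First I would assemble the building blocks. Write $A=\enmo_R P$, an $H$-algebra over $R$, and $\Omega=\{Hg\mid g\in G\}$. For $\omega=Hg$ the conjugate module $P_{\omega}=P^{\otimes g}$ is again finitely generated projective over $R$, and there is a natural identification $(\enmo_R P)_{\omega}\iso\enmo_R(P_{\omega})$, both sides being the $g$-conjugate of $\enmo_R P$. Hence $P^{\otimes G}=\bigotimes_{\omega}P_{\omega}$ is finitely generated projective over $R$, and the classical map
\[
  \kappa\colon (\enmo_R P)^{\otimes G}
    = \bigotimes_{\omega}\enmo_R(P_{\omega})
    \to \enmo_R\Bigl(\bigotimes_{\omega}P_{\omega}\Bigr)
    = \enmo_R(P^{\otimes G}),
\]
sending $\bigotimes_{\omega}a_{\omega}$ (with $a_{\omega}\in\enmo_R(P_{\omega})$) to the endomorphism acting as $a_{\omega}$ on the $\omega$-th tensorand, is an isomorphism of $R$-algebras. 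This is the standard statement for finitely generated projective modules, proved by reducing to the free case or by localization.

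Next I would show that $\kappa$ is $G$-equivariant by realizing it through Theorem~\ref{t:universal}. Define $\psi\colon \enmo_R P\to\enmo_R(P^{\otimes G})$ by letting $\psi(a)$ act as $a$ on the distinguished factor $P_H=P\otimes 1$ and as the identity on every other tensorand. For $h\in H$ the action of $h$ on $P^{\otimes G}$ fixes the coset $H$ and acts on $P_H$ by $\rho_{H,h}\colon p\otimes 1\mapsto ph\otimes 1$, so conjugating $\psi(a)$ by $h$ yields $\psi(a^h)$, matching the $G$-algebra structure $pa^h=((ph^{-1})a)h$ of $\enmo_R P$; thus $\psi$ is an $H$-algebra homomorphism. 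Moreover, for $g\in G\setminus H$ the image $\psi(\enmo_R P)$ is supported on the factor $P_H$, while its conjugate $\psi(\enmo_R P)^g$ is supported on $P_{Hg}$; since $Hg\neq H$ these endomorphisms live on different tensorands and hence commute. So $\psi$ satisfies the hypotheses of Theorem~\ref{t:universal}, giving a unique $G$-algebra homomorphism $(\enmo_R P)^{\otimes G}\to\enmo_R(P^{\otimes G})$ extending $\psi$. By the explicit formula in the proof of Theorem~\ref{t:universal}, this homomorphism sends $\bigotimes_{t\in T}a_t^{\otimes t}$ to $\prod_{t\in T}(a_t\psi)^t$, i.e. to the endomorphism acting as the appropriate conjugate of $a_t$ on the factor $P_{Ht}$ and as the identity elsewhere. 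This is exactly $\kappa$, so $\kappa$ is at once an $R$-algebra isomorphism and a $G$-algebra homomorphism, hence an isomorphism of $G$-algebras over $R$.

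The main obstacle I anticipate is organizational rather than substantive: keeping the two $G$-actions in lockstep under the factor-wise identification — the permute-and-conjugate action on $(\enmo_R P)^{\otimes G}$ on the one hand, and the conjugation action $p\alpha^g=((pg^{-1})\alpha)g$ on $\enmo_R(P^{\otimes G})$ on the other — and confirming that the map furnished by Theorem~\ref{t:universal} really coincides with the classical tensor-to-endomorphism map $\kappa$, rather than being merely some $G$-algebra homomorphism. Once these identifications are pinned down, bijectivity is the standard finitely-generated-projective fact and needs no further argument.
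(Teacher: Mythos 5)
Your proof is correct, and its core is the same as the paper's: both rest on the factor-wise identification $(\enmo_R P)_{\omega}\iso\enmo_R(P_{\omega})$ together with the classical isomorphism $\bigotimes_{\omega}\enmo_R(P_{\omega})\iso\enmo_R\bigl(\bigotimes_{\omega}P_{\omega}\bigr)$, valid because each $P_{\omega}$ is finitely generated projective over $R$ and $\Omega$ is finite. Where you genuinely differ is in how $G$\nbd equivariance is established. The paper defines the action of $S_{\omega}=(\enmo_R P)_{\omega}$ on $P_{\omega}$ explicitly by $(x\otimes t)s^{\otimes t}=(xs)\otimes t$, verifies by direct computation that this is independent of the representative $t\in\omega$, and then asserts that the resulting $R$\nbd algebra isomorphism commutes with the $G$\nbd actions, leaving that check to the reader. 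You instead route the construction through Theorem~\ref{t:universal}: your $\psi$, placing $a$ on the distinguished factor $P_H$ and the identity elsewhere, is an $H$\nbd algebra homomorphism whose conjugates $\psi(\enmo_R P)^g$ for $g\notin H$ are supported on distinct tensorands and hence commute, so the universal property produces a $G$\nbd algebra homomorphism outright, and the explicit formula $\bigotimes_{t\in T}a_t^{\otimes t}\mapsto\prod_{t\in T}(a_t\psi)^t$ from the proof of Theorem~\ref{t:universal} identifies it with the classical map $\kappa$, whose bijectivity is the finitely-generated-projective fact. This buys you equivariance, and also well-definedness independent of coset representatives, essentially for free (the $H$\nbd equivariance of $\psi$ gives $(a\psi)^{ht}=(a^h\psi)^t$, which is exactly the consistency the paper checks by hand), at the cost of verifying the hypotheses of Theorem~\ref{t:universal} and matching the universal map with $\kappa$ — roughly the computations hidden in the paper's ``easy to check,'' but organized so that each step is forced by uniqueness. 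One small looseness: ``both sides being the $g$\nbd conjugate of $\enmo_R P$'' does not by itself yield a canonical identification, since conjugates are only determined up to isomorphism; the identification is the conjugation map $s^{\otimes t}\mapsto\rho_{H,t}^{-1}s\,\rho_{H,t}$, and its independence of $t\in\omega$ needs the check just mentioned — but since your main argument supplies that independence through $\psi$, this is a presentational blemish rather than a gap.
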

\begin{proof}
  Write $S= \enmo_R P$.
  Observe that $S$ is an $H$\nbd algebra over $R$.
  We have 
  \[P^{\otimes G}= \bigotimes_{\omega\in \Omega}P_{\omega}
    \quad\text{and}\quad
    S^{\otimes G} = \bigotimes_{\omega\in\Omega}S_{\omega}.
  \]
  Let $t\in \omega$ and $s\in S$.
  We define an action of $S_{\omega}$ on $P_{\omega}$
  by
  \[ (x \otimes t)s^{\otimes t} = (xs)\otimes t.
  \]
  This is independent of the choice of $t\in \omega$
  since 
  \begin{align*} 
   (x \otimes (ht) )s^{\otimes ht}
    = (xs)\otimes (ht)
    &= ((xs)h) \otimes t
    \\
    &= (xhs^h)\otimes t
     = \big((xh)\otimes t \big) (s^h)^{\otimes t}
     .
  \end{align*}
  We can identify $S_{\omega}$ with $ \enmo_R P_{\omega}$
  via this action.
    Since $P_R$ is finitely generated projective,
    we have 
    \[\enmo_R(P^{\otimes G}) 
      = \enmo_R \left( \bigotimes_{\omega\in \Omega} 
                       P_{\omega}
                \right)
      \iso \bigotimes_{\omega\in \Omega}
            \enmo_R P_{\omega}
      \iso \bigotimes_{\omega\in \Omega} S_{\omega}
      = S^{\otimes G}
    \]
    as $R$\nbd algebras,
  where $S^{\otimes G}$ acts on
  $P^{\otimes G}$ by
  \[\left(\bigotimes_{\omega\in \Omega}x_{\omega}\right)
    \left(\bigotimes_{\omega\in \Omega}s_{\omega}\right)
    = \bigotimes_{\omega\in \Omega}(x_{\omega}s_{\omega}).
  \]
  The isomorphism above commutes with the action of $G$,
  as is easy to check. 
  This finishes the proof.
\end{proof}
\begin{prop}\label{p:equivcores}
  Let $R$ be a commutative $G$-ring
  and $H\leq G$.
  Let $A$ and $B$ be equivalent $H$\nbd algebras
  over $R$.
  Then $A^{\otimes G}$ and $B^{\otimes G}$
  are equivalent as $G$\nbd algebras over $R$.
\end{prop}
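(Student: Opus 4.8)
The plan is to unwind the definition of equivalence and then push everything through the functor $(\cdot)^{\otimes G}$, using the multiplicativity from Proposition~\ref{p:tiprod} together with Lemma~\ref{l:coresmodule} to recognise that tensor induction carries trivial algebras to trivial algebras. Since $A$ and $B$ are equivalent $H$\nbd algebras, there are trivial $H$\nbd algebras $S=\enmo_R P$ and $T=\enmo_R Q$, with $P,Q$ right $RH$\nbd modules that are progenerators over $R$, such that $A\otimes_R S\iso B\otimes_R T$ as $H$\nbd algebras over $R$.

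First I would apply the functor $(\cdot)^{\otimes G}$ (Proposition~\ref{p:ti_func}) to this isomorphism. Since a functor preserves isomorphisms, this yields a $G$\nbd algebra isomorphism $(A\otimes_R S)^{\otimes G}\iso(B\otimes_R T)^{\otimes G}$. Applying the natural isomorphism of Proposition~\ref{p:tiprod} to both sides then rewrites this as
\[
  A^{\otimes G}\otimes_R S^{\otimes G}\iso B^{\otimes G}\otimes_R T^{\otimes G}
\]
as $G$\nbd algebras over $R$. Thus it remains only to verify that $S^{\otimes G}$ and $T^{\otimes G}$ are \emph{trivial} $G$\nbd algebras, for then the displayed isomorphism is exactly the condition that $A^{\otimes G}$ and $B^{\otimes G}$ are equivalent.

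To identify $S^{\otimes G}$, I would invoke Lemma~\ref{l:coresmodule}, which gives $S^{\otimes G}=(\enmo_R P)^{\otimes G}\iso\enmo_R(P^{\otimes G})$ as $G$\nbd algebras (and likewise $T^{\otimes G}\iso\enmo_R(Q^{\otimes G})$). Since $P^{\otimes G}$ carries the compatible $G$\nbd action coming from tensor induction, it is an $RG$\nbd module, so $\enmo_R(P^{\otimes G})$ is a trivial $G$\nbd algebra provided $P^{\otimes G}$ is a progenerator over $R$. This last point is the only real content: I would note that each factor $P_\omega$ in $P^{\otimes G}=\bigotimes_{\omega\in\Omega}P_\omega$ is a $g$\nbd conjugate of the progenerator $P$, hence itself a progenerator over $R$ (applying a ring automorphism of $R$ preserves being finitely generated projective and faithful), and that a tensor product over $R$ of finitely many progenerators is again a progenerator. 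As $\abs{G:H}$ is finite, $\Omega$ is finite and this applies.

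The main obstacle is therefore the verification that a finite tensor product of progenerators over $R$ is a progenerator; everything else is a formal chase through the functoriality (Proposition~\ref{p:ti_func}), the multiplicativity (Proposition~\ref{p:tiprod}), and the module-to-endomorphism-algebra compatibility (Lemma~\ref{l:coresmodule}). Once $S^{\otimes G}$ and $T^{\otimes G}$ are known to be trivial, the proof is complete.
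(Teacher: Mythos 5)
Your proposal is correct and follows essentially the same route as the paper's own proof: apply corestriction to the isomorphism $A\otimes_R\enmo_R P\iso B\otimes_R\enmo_R Q$, then use Proposition~\ref{p:tiprod} and Lemma~\ref{l:coresmodule} to rewrite both sides as $A^{\otimes G}\otimes_R\enmo_R(P^{\otimes G})\iso B^{\otimes G}\otimes_R\enmo_R(Q^{\otimes G})$. Your only addition is the explicit check that $P^{\otimes G}$ is an $R$\nbd progenerator (each $P_\omega$ being a conjugate of a progenerator, and a finite tensor product of progenerators being a progenerator), a point the paper leaves implicit, so if anything your write-up is slightly more careful.
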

\begin{proof}
  Let $P$ and $Q$ be $RH$\nbd modules
  that are $R$\nbd progenerators and such that
  $A\otimes_R \enmo_R P\iso B\otimes_R \enmo_R Q$
  as $H$\nbd algebras over $R$.
  Then
  \[ (A\otimes_R \enmo_R P)^{\otimes G}
     \iso
     (B\otimes_R \enmo_R Q)^{\otimes G}
  \] 
  as $G$\nbd algebras over $R$ by Proposition~\ref{p:funccores}.
  By Lemma~\ref{l:coresmodule} and Proposition~\ref{p:tiprod},
  \[ A^{\otimes G} \otimes_R \enmo_R(P^{\otimes G})
     \iso A^{\otimes G} \otimes_R (\enmo_R P)^{\otimes G}
     \iso (A\otimes_R \enmo_R P)^{\otimes G}
  \]
  and similarly for $B$ and $Q$.
  Thus
  \[ A^{\otimes G} \otimes_R \enmo_R(P^{\otimes G})
     \iso 
     B^{\otimes G} \otimes_R \enmo_R (Q^{\otimes G}),
  \]
  which shows that
  $A^{\otimes G}$ and $B^{\otimes G}$ are equivalent.   
\end{proof}
\begin{cor}
  Corestriction defines a group homomorphism
  \[\Cores=\Cores_H^G\colon \BrCliff(H,R)\to \BrCliff(G,R).
  \]
\end{cor}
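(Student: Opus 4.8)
The plan is to assemble the statement from the three structural results already established, verifying by hand only the single additional point that corestriction does not leave the class of Azumaya algebras.

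First I would check that $\Cores$ sends Azumaya $H$\nbd algebras to Azumaya $G$\nbd algebras. Given an Azumaya $H$\nbd algebra $A$ over $R$, the corestriction $A^{\otimes G}=\bigotimes_{\omega\in\Omega}A_{\omega}$ is a \emph{finite} tensor product, since $\abs{G:H}$ is finite and hence $\Omega$ is finite. Each factor $A_{\omega}=A^{\otimes g}$ is a $g$\nbd conjugate of $A$ as an $R$\nbd algebra; because $g$ acts on $R$ by a ring automorphism, this twist transports the defining data of an Azumaya algebra (faithful projectivity over $R$ together with the isomorphism $A_{\omega}\otimes_R A_{\omega}^{\mathrm{op}}\to\enmo_R A_{\omega}$) from those of $A$, so each $A_{\omega}$ is Azumaya over $R$. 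A finite tensor product of Azumaya $R$\nbd algebras is again Azumaya over $R$, as was already recorded for two factors and follows in general by induction. Thus $A^{\otimes G}$ is an Azumaya $G$\nbd algebra over $R$.

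Next I would pass to equivalence classes using Proposition~\ref{p:equivcores}, which states precisely that equivalent $H$\nbd algebras have equivalent corestrictions; hence $[A]\mapsto[A^{\otimes G}]$ is a well-defined map $\BrCliff(H,R)\to\BrCliff(G,R)$. For the homomorphism property I would recall that multiplication in the Brauer-Clifford group is induced by $\otimes_R$ and invoke Proposition~\ref{p:tiprod}, which supplies a natural isomorphism $(A\otimes_R B)^{\otimes G}\iso A^{\otimes G}\otimes_R B^{\otimes G}$ of $G$\nbd algebras over $R$. Therefore
\[ \Cores([A][B]) = \bigl[(A\otimes_R B)^{\otimes G}\bigr]
   = \bigl[A^{\otimes G}\otimes_R B^{\otimes G}\bigr]
   = \Cores([A])\,\Cores([B]), \]
and preservation of the identity and of inverses is then automatic, both sides being groups.

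The only step calling for genuine (if routine) verification is the Azumaya claim for the conjugate factors $A_{\omega}$; everything else is a direct appeal to the functoriality, multiplicativity and equivalence-invariance of tensor induction proved earlier. I expect the conjugate-is-Azumaya observation to be the main obstacle, but it reduces to noting that applying the automorphism $r\mapsto r^g$ carries the Azumaya data of $A$ to that of $A^{\otimes g}$.
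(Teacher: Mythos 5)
Your proposal is correct and follows essentially the same route as the paper: the paper's proof likewise checks that $\Cores(A)$ is Azumaya (as a $\abs{G:H}$\nbd fold tensor product over $R$), then cites Proposition~\ref{p:equivcores} for well-definedness on classes and Proposition~\ref{p:tiprod} for the homomorphism property. If anything, you are slightly more careful than the paper on the one nontrivial point: the paper simply calls $\Cores(A)$ a tensor product of copies of $A$, whereas you correctly note the factors $A_{\omega}$ are $g$\nbd conjugates of $A$ and verify that conjugation by a ring automorphism of $R$ preserves the Azumaya property.
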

\begin{proof}
  As $R$\nbd algebra, 
  $\Cores(A)$ is a $\abs{G:H}$\nbd fold tensor product
  of $A$ over $R$.
  Thus if $A$ is Azumaya over $R$, then $\Cores(A)$ is, too.
  By Proposition~\ref{p:equivcores}, 
  $\Cores$ as mapping of Brauer-Clifford groups
  is well defined.
  By Proposition~\ref{p:tiprod}, it is
  a group homomorphism.
\end{proof}
\section{Corestriction and restriction}
In this section we prove that
$\Cores_H^G(\Res_H^G[A]) = [A]^{\abs{G:H}}$
when $A$ is an Azumaya $G$\nbd algebra over $R$. 
The proof follows Tignol's proof of the same result
for the Brauer group over a field~\cite{Tignol87}.

Let $A$ be an Azumaya algebra over the commutative ring
$R$. 
We need a property of the
\emph{reduced trace}
$\trd=\trd_{A/R}\colon A\to R$~\cite[IV.2]{KnusOj74}
which is probably well known.
\begin{lemma}\label{l:invar_trd}
  Let $A$ be an Azumaya algebra over the commutative ring $R$
  and $g$ a ring automorphism of $A$
  (which restricts to a ring automorphism of
  $R\iso \Z(A)$).
  Then 
  \[ \trd(a^g) = \trd(a)^g
      \quad \text{for all $a\in A$}.
  \]
\end{lemma}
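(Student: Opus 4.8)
The plan is to exploit two standard functoriality properties of the reduced trace of an Azumaya algebra \cite[IV.2]{KnusOj74}. First, $\trd$ is preserved by every isomorphism of $R$\nbd algebras: if $\theta\colon B\to A$ is an isomorphism of Azumaya $R$\nbd algebras, then $\trd_A(\theta(b))=\trd_B(b)$. Second, $\trd$ is compatible with base change along a ring homomorphism $f\colon R\to R'$, in the sense that $\trd_{A\otimes_R R'}(a\otimes 1)=f\bigl(\trd_{A/R}(a)\bigr)$. Both properties follow from the very definition of $\trd$ by faithfully flat descent, since after passing to a splitting ring the reduced trace becomes the ordinary endomorphism trace, which is invariant under conjugation and commutes with base change.

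Write $\sigma=g|_R$ for the induced automorphism of $R$, so that the assertion reads $\trd(a^g)=\trd(a)^\sigma$. The key idea is to turn the $\sigma$\nbd semilinearity of $g$ into an honest $R$\nbd linear isomorphism by twisting the scalars. I would form the base change $A_\sigma=A\otimes_{R,\sigma}R$ of $A$ along $\sigma\colon R\to R$, which is again Azumaya over $R$, together with the canonical map $\iota\colon A\to A_\sigma$, $\iota(a)=a\otimes 1$. A direct check shows that $\iota$ is $\sigma$\nbd semilinear, $\iota(ra)=r^\sigma\iota(a)$, exactly as $g$ is; and the base-change property gives
\[ \trd_{A_\sigma}\bigl(\iota(a)\bigr)=\trd_A(a)^\sigma. \]

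Now, since $g$ is a $\sigma$\nbd semilinear ring homomorphism of $A$ into the $R$\nbd algebra $A$, the universal property of the base change supplies a unique $R$\nbd algebra homomorphism $\theta\colon A_\sigma\to A$ with $\theta\circ\iota=g$, namely $\theta(a\otimes s)=g(a)\,s$; as $g$ is bijective, $\theta$ is readily seen to be an isomorphism of $R$\nbd algebras. Combining the two cited properties then yields
\[ \trd_A(a^g)=\trd_A\bigl(\theta(\iota(a))\bigr)=\trd_{A_\sigma}\bigl(\iota(a)\bigr)=\trd_A(a)^\sigma=\trd(a)^g, \]
which is the claim.

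I expect the main obstacle to be purely organizational: setting up the twisted base change $A_\sigma$ with consistent conventions, checking that $\theta$ is a genuine $R$\nbd algebra isomorphism and not merely a ring map, and ensuring the two invariance properties of $\trd$ are available in precisely the stated form. An alternative, more computational route avoids the twist: one reduces to a faithfully flat splitting $A\otimes_R T\iso\enmo_T(P)$, where $\trd$ is the endomorphism trace, and observes that left multiplication by $a^g$ is the conjugate of left multiplication by $a$ under the semilinear automorphism induced by $g$; a short matrix computation then shows that conjugating an $R$\nbd linear endomorphism by a $\sigma$\nbd semilinear automorphism applies $\sigma$ to its trace. The twist approach is cleaner precisely because it packages this semilinear bookkeeping into the single standard base-change identity.
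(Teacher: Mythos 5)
Your proof is correct and takes essentially the same route as the paper: both twist scalars along $\sigma=g|_R$ to form $A\otimes_{R,\sigma}R$, combine the base-change compatibility of $\trd$ with its invariance under isomorphisms of Azumaya $R$\nbd algebras, and your $\theta(a\otimes s)=g(a)\,s$ is exactly the paper's isomorphism $f$. The only cosmetic difference is that you invoke the universal property of base change to produce $\theta$, whereas the paper verifies well-definedness of $f$ directly and exhibits the inverse $a\mapsto a^{g^{-1}}\otimes 1$.
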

\begin{proof}
  The reduced trace
  commutes with scalar extensions.  
  This means: Let
  $\phi\colon R\to S$ be a ring homomorphism
  (with $\phi(1_R)= 1_S$).
  The homomorphism $\phi$ makes $S$ into an $R$\nbd module.
  The algebra $A\otimes_R S$ is Azumaya over $S$ and
   we have
  \[ \trd_{A\otimes_R S/S}(a\otimes 1)= \trd_{A/R}(a)^{\phi}.
  \]
  (This follows from the proof that the characteristic polynomial 
   and the reduced trace are well-defined~\cite[Proposition~IV.2.1]{KnusOj74}.) 
  We will apply this fact to
  $\phi=g_{|R}\colon R\to R$.
  We write $A\otimes_{R,\phi}R$ for the corresponding 
  tensor product, to emphasize the dependence on $\phi$.

  Second, if $f\colon B\to A$ is an isomorphism
  of Azumaya $R$\nbd algebras, then
  \[\trd_{A/R}(f(b))=\trd_{B/R}(b)
  \] 
  \cite[Lemme~IV.2.2]{KnusOj74}.  
  We apply this to the map $f\colon B=A\otimes_{R,\phi}R\to A$
  defined by
  $f(a\otimes r) = a^g r$.
  Note that $f$ is well-defined since
  $ar\otimes s = a\otimes r^{\phi}s$
  in $A\otimes_{R,\phi}R$ and so
  $f(ar\otimes s)= (ar)^gs = a^g r^{\phi}s
   = f(a\otimes r^{\phi}s)$.
  It is easy to check that $f$ is an isomorphism of $R$\nbd algebras,
  the inverse is given by 
  $a\mapsto a^{g^{-1}}\otimes 1$.
  
  Now the composition
  \[ \begin{tikzcd}
        A \rar
         & A\otimes_{R,\phi} R \rar{f}
         & A
     \end{tikzcd}
  \]
  yields the map $a\mapsto a^g$.
  Applying the above statements, we get
  \begin{align*}
    \trd_{A/R}(a^g) = \trd_{A/R}(f(a\otimes 1))
                    = \trd_{B/R}(a\otimes 1)
                   &= \trd_{A/R}(a)^{\phi}
                   \\
                   &= \trd_{A/R}(a)^g
  \end{align*}
  as claimed.
\end{proof}
We need another lemma. We write $\sym{n}$ to denote
the symmetric group on $n$ letters.
\begin{lemma}\label{l:switchhom}
  Let $A$ be an Azumaya $G$\nbd algebra over 
  the $G$\nbd ring $R$ and $n\in \nats$.
  Then there is a group homomorphism
  \[\sigma\colon \sym{n}\to 
     ( \underbrace{A\otimes_R \dotsm \otimes_R A}_{n} )^*
  \]
  such that 
  \[ \sigma(\pi)^{-1}
      (a_1 \otimes \dotsm \otimes a_n)\sigma(\pi)
       = a_{1\pi^{-1}}\otimes \dotsm \otimes a_{n\pi^{-1}}
  \]
  and $\sigma(\pi)^g=\sigma(\pi)$ for all 
  $\pi\in \sym{n}$ and $g\in G$.
\end{lemma}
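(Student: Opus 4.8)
The plan is to produce a single canonical $G$\nbd invariant unit of $A\otimes_R A$ that realizes the switch of the two tensor factors by conjugation, and then to spread it across the $n$ factors of $A^{\otimes n}=A\otimes_R\dotsm\otimes_R A$. Since $A$ is Azumaya it is finitely generated projective over $R$ and the reduced trace form $(x,y)\mapsto\trd(xy)$ is nondegenerate~\cite[IV.2]{KnusOj74}; hence there is a canonical isomorphism
\[ \Phi\colon A\otimes_R A \xrightarrow{\ \iso\ } \enmo_R(A),
   \qquad \Phi(a\otimes b)\colon x\mapsto a\,\trd(bx). \]
I set $g:=\Phi^{-1}(\mathrm{id}_A)$; wherever $A$ is free over $R$ this is the Casimir element $g=\sum_i u_i\otimes v_i$ for dual bases $\trd(v_iu_j)=\delta_{ij}$. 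The standard identities $(a\otimes 1)g=g(1\otimes a)$ and $(1\otimes a)g=g(a\otimes 1)$, which come from the associativity and symmetry of the trace form, combine to give $g(a\otimes b)=(b\otimes a)g$. Thus conjugation by $g$ realizes the switch; $g$ is in fact a unit (the Goldman element of $A$) with $g^2=1$, so that
\[ g^{-1}(a\otimes b)g = b\otimes a \qquad(a,b\in A) \]
(invertibility and $g^2=1$ also follow from the split case treated below).

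The decisive gain from this description is $G$\nbd invariance. By Lemma~\ref{l:invar_trd} we have $\trd(y^\gamma)=\trd(y)^\gamma$, and this says exactly that $\Phi$ intertwines the diagonal action of $G$ on $A\otimes_R A$ with the natural action on $\enmo_R(A)$: for $\gamma\in G$ one checks $\Phi(a\otimes b)^\gamma=\Phi(a^\gamma\otimes b^\gamma)$, the only point being $\trd(bx^{\gamma^{-1}})^\gamma=\trd(b^\gamma x)$. Since $\mathrm{id}_A$ is fixed by $G$, so is its preimage, giving $g^\gamma=g$ for all $\gamma\in G$. The same formula shows that $g$ is compatible with scalar extension, because the reduced trace is.

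I would then define $\sigma$ on the Coxeter generators $s_i=(i\ i{+}1)$ of $\sym{n}$ by inserting $g$ into the factors $i$ and $i+1$,
\[ \sigma(s_i)=1^{\otimes(i-1)}\otimes g\otimes 1^{\otimes(n-i-1)}
   \in \big(A\otimes_R\dotsm\otimes_R A\big)^*. \]
Each $\sigma(s_i)$ is then a $G$\nbd invariant unit whose conjugation interchanges the $i$\nbd th and $(i{+}1)$\nbd th tensor slots. To see that $\sigma$ extends to a homomorphism on all of $\sym{n}$ one has to check the defining relations: $\sigma(s_i)^2=1$ (immediate from $g^2=1$); $\sigma(s_i)\sigma(s_j)=\sigma(s_j)\sigma(s_i)$ for $\abs{i-j}\geq 2$ (immediate, as the two units are supported on disjoint tensor factors); and the braid relation $\sigma(s_i)\sigma(s_{i+1})\sigma(s_i)=\sigma(s_{i+1})\sigma(s_i)\sigma(s_{i+1})$.

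The braid relation is the one real obstacle. It is an identity between two prescribed elements of $A^{\otimes 3}$ in which $G$ plays no role, and both elements implement, by conjugation, the same permutation of the three factors; hence they agree up to a central unit $\lambda\in R^*$. To pin down $\lambda=1$ I would use faithfully flat descent: choose a faithfully flat $R\to R'$ with $A\otimes_R R'\iso\enmo_{R'}(P)$ for some finitely generated projective $R'$\nbd module $P$. As $g$ commutes with scalar extension, after this base change $\sigma(s_i)$ becomes the genuine operator on $P^{\otimes n}$ transposing two factors; these operators give the permutation representation of $\sym{n}$ on $P^{\otimes n}$ and therefore satisfy the braid relation identically. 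Since $A^{\otimes 3}\hookrightarrow A^{\otimes 3}\otimes_R R'$ is injective, the relation descends, so $\lambda=1$. With all relations verified, $\sigma$ is a well-defined group homomorphism; every $\sigma(\pi)$ is $G$\nbd invariant, being a product of $G$\nbd invariant generators, and the asserted conjugation formula holds because it holds on the generators and conjugation is multiplicative.
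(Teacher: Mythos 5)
Your proposal is correct and takes essentially the same route as the paper: the Goldman element of $A\otimes_R A$ (your Casimir element $\Phi^{-1}(\mathrm{id}_A)$ is exactly that element), $G$\nbd invariance of it deduced from Lemma~\ref{l:invar_trd}, the Coxeter presentation of $\sym{n}$ with the same three relations to check, and verification of the braid relation by faithfully flat reduction to the split case. The remaining differences are cosmetic: you derive the switch identity from the symmetry and associativity of the trace form and check the braid relation via the transposition operators on $P^{\otimes n}$ (the detour through the central unit $\lambda$ is unnecessary, since descent applies to the braid identity directly), where the paper cites Goldman's result from Knus--Ojanguren and does an explicit matrix-unit computation.
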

\begin{proof}
  Consider first the case where $n=2$.
  Since $\enmo_R(A)\iso A\otimes_R A^{op}$,
  there is a unique element 
  $t=\sum_{i}x_i\otimes y_i\in A\otimes_R A$ 
  such that
  $\trd(a)1_A= \sum_{i} x_i a y_i$ for 
  all $a\in A$. 
  By a result of Goldman, 
  this element has the properties
  $t^2 = 1$ and $(a\otimes b)t=t(b\otimes a)$
  for all $a$, $b\in A$~\cite[Proposition~IV.4.1]{KnusOj74}.
  To finish the case $n=2$, 
  it remains to show that $t^g= t$ for $g\in G$.
  By Lemma~\ref{l:invar_trd}, we have
  $\trd(a^g)=\trd(a)^g$ for $g\in G$.
  This means that
  \[ \sum_{i} x_i a^g y_i = \trd(a^g)=\trd(a)^g 
       = \sum_{i} x_i^g a^g y_i^g
  \]
  for all $a\in A$. 
  Thus 
  \[ t = \sum_i x_i\otimes y_i  
       = \sum_i x_i^g\otimes y_i^g = t^g,\]
  as desired.
  
  Now for the general case. 
  By what we have done already, for every pair
  $(i,j)$ there is an element 
  $t_{ij}\in (A^{\otimes n})$ such that the inner automorphism
  induced by $t_{ij}$ switches the positions $i$ and $j$,
  and such that $t_{ij}$ is centralized by $G$.
  We first define the homomorphism $\sigma$
  on the
  neighbor transpositions $(i,i+1)$,
  setting $\sigma((i,i+1))=t_{i,i+1}$.
  To show that this extends to a homomorphism of the 
  symmetric group $S_n$ into $(A^{\otimes n})^*$,
  we use the fact that $S_n$ is a Coxeter group
  generated by the neighbor transpositions with relations
  $\big((i,i+1)(k,k+1)\big)^{m(i,k)}=1$,
  where $m(i,k)$ is the order of the corresponding element. 
  Thus we have to check three types of relations.
  The first is
  $t_{i,i+1}^2= 1$, which follows from the result of
  Goldman cited before.
  The second type of relation to check is 
  $(t_{i,i+1}t_{k,k+1})^2=1$
  whenever $\{i,i+1\}\cap\{k,k+1\}=\emptyset$.
  This relation is clear in view of $t_{i,i+1}^2=1$
  and since $t_{i,i+1}$ and $t_{k,k+1}$ live in different 
  components of $A^{\otimes n}$.
  Finally, we have to check that
  $(t_{i,i+1}t_{i+1,i+2})^3=1$.
  To do this, one can proceed as Knus and Ojanguren
  in their proof of $t^2= 1$~\cite[Proposition~IV.4.1]{KnusOj74}
  and reduce the problem to the case where
  $A$ is a matrix ring over $R$.
  Then in terms of matrix units $e_{rs}$, we have
  $t=\sum_{r,s} e_{rs}\otimes e_{sr}$.
  It suffices to compute in $A\otimes A\otimes A$, where
  we have to check that
  \[ \left( \left(\sum_{r,s} e_{rs}\otimes e_{sr}\otimes 1
            \right)
            \left(\sum_{u,v} 1\otimes e_{uv}\otimes e_{vu}
            \right)
     \right)^3 = 1\otimes 1 \otimes 1.
  \]
  We leave this simple computation to the reader.
\end{proof}
The existence of a homomorphism
$\sigma\colon S_n\to (A^{\otimes n})^*$
was also proved by Haile~\cite[Lemma~1.1]{Haile79}
and Saltman~\cite[Theorem~2]{Saltman81},
but we need the additional property that
$\sigma(\pi)$ is centralized by the group $G$.
Note that since $G$ is completely arbitrary,
the lemma says that all ring automorphisms
of $A$ centralize the image of $\sigma$.
\begin{thm}\label{t:rescores}
  Let $A$ be an Azumaya $G$\nbd algebra over 
  the $G$\nbd ring $R$ and $H\leq G$.
  Then $(A_H)^{\otimes G}$ and
  $A^{\otimes\abs{G:H}}$ are equivalent as
  $G$\nbd algebras over $R$.
  In other words, 
  \[ \begin{tikzcd}
       \BrCliff(G,R)\rar{\Res_H^G} 
       &\BrCliff(H,R)\rar{\Cores_H^G}
       &\BrCliff(G,R)
     \end{tikzcd}
  \]
  sends $[A]$ to $[A]^{\abs{G:H}}$.
\end{thm}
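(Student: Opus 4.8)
The plan is to imitate Tignol's argument: exhibit an isomorphism of the underlying $R$\nbd algebras $A^{\otimes\abs{G:H}}\iso(A_H)^{\otimes G}$ and then check that, under this identification, the two $G$\nbd actions differ only by an inner twist coming from a $G$\nbd invariant homomorphism into the units, a twist that is invisible in $\BrCliff(G,R)$. Throughout write $n=\abs{G:H}$, fix representatives $G=\bigdcup_{t\in T}Ht$, and let $\alpha$ denote the diagonal $G$\nbd action on $A^{\otimes n}=\bigotimes_{t\in T}A$, so that $\bigl[(A^{\otimes n},\alpha)\bigr]=[A]^{n}$. As $R$\nbd algebras, $(A_H)^{\otimes G}=\bigotimes_{t\in T}A^{\otimes t}$, where each factor $A^{\otimes t}$ is the $t$\nbd conjugate of $A$.

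First I would use the fact that $A$ already carries a full $G$\nbd action to untwist the conjugate factors. For each $t\in T$ the map $\psi_t\colon A\to A^{\otimes t}$, $a\mapsto(a^{t^{-1}})^{\otimes t}$, is an isomorphism of $R$\nbd algebras: it is multiplicative and unital because both $a\mapsto a^{t^{-1}}$ and $a\mapsto a^{\otimes t}$ are, and the inserted exponent $t^{-1}$ cancels the $t$\nbd conjugate scalar action, since $(a^{t^{-1}})^{\otimes t}\cdot s=(a^{t^{-1}}s^{t^{-1}})^{\otimes t}=((as)^{t^{-1}})^{\otimes t}=\psi_t(as)$ for $s\in R$. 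Tensoring the $\psi_t$ over $R$ yields an $R$\nbd algebra isomorphism
\[ \Phi=\bigotimes_{t\in T}\psi_t\colon (A^{\otimes n},\alpha)\longrightarrow (A_H)^{\otimes G},\qquad \bigotimes_{t\in T}a_t\longmapsto\bigotimes_{t\in T}(a_t^{t^{-1}})^{\otimes t}. \]

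The second, and I expect most delicate, step is to transport the tensor\nbd induced $G$\nbd action back along $\Phi$. Using the defining relation $(a^{\otimes t})^g=a^{\otimes tg}$ from Theorem~\ref{t:cores}, together with the coset bookkeeping $tg=h_{t,g}\,\overline{tg}$ with $h_{t,g}\in H$ (where $\overline{tg}\in T$ is the chosen representative of $Htg$), a direct computation should show that $\Phi$ is a $G$\nbd algebra isomorphism $(A^{\otimes n},\beta)\iso(A_H)^{\otimes G}$, where $\beta$ is the diagonal action corrected by the automorphism $P_g$ that permutes the tensor factors according to the action of $g$ on $\Omega=\{Hg\mid g\in G\}$; as $P_g$ and $\alpha_g$ commute we may write $\beta_g=P_g\circ\alpha_g$. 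The point that requires care is that the $H$\nbd correction terms $h_{t,g}$ combine with the untwisting exponents $t^{-1}$ so as to leave \emph{precisely} a permutation of the factors and no residual conjugation within a factor; verifying this, and pinning down the left/right conventions so that $g\mapsto P_g$ is genuinely compatible with the group law, is the main obstacle.

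Finally, Lemma~\ref{l:switchhom} removes the permutation twist. Writing $\pi_g\in\sym{n}$ for the permutation that $g$ induces on $\Omega$ and setting $w_g=\sigma(\pi_g)$, the lemma produces units $w_g\in(A^{\otimes n})^{*}$ with $P_g=\operatorname{inn}(w_g)$ and, crucially, $w_g^{g'}=w_g$ for all $g'\in G$; thus $g\mapsto w_g$ is a homomorphism into the units of $A^{\otimes n}$ fixed by $G$, and $\beta_g=\operatorname{inn}(w_g)\circ\alpha_g$. I would then exhibit an equivariant Morita equivalence between $(A^{\otimes n},\beta)$ and $(A^{\otimes n},\alpha)$: take the bimodule $M=A^{\otimes n}$ with its regular left and right module structures and equip it with the $G$\nbd action $m\mapsto w_g\,\alpha_g(m)$. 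The $G$\nbd invariance of the $w_g$ is exactly what makes this a genuine action (here $w$ being a homomorphism is used), and one checks at once that it is compatible with left multiplication by $(A^{\otimes n},\beta)$ and right multiplication by $(A^{\otimes n},\alpha)$. By the Morita description of the equivalence relation \cite[Proposition~9]{HermanMitra11}, these two $G$\nbd algebras represent the same class, and therefore
\[ \Cores_H^G\bigl(\Res_H^G[A]\bigr)=\bigl[(A_H)^{\otimes G}\bigr]=\bigl[(A^{\otimes n},\alpha)\bigr]=[A]^{\abs{G:H}}. \]
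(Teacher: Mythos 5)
Your proposal is correct and takes essentially the same route as the paper's proof (which also follows Tignol): the same untwisting isomorphism built from the representative\nbd independent maps $\psi_t$ (the inverses of the paper's $\phi_\omega\colon a^{\otimes t}\mapsto a^t$), the same key Lemma~\ref{l:switchhom} supplying $G$\nbd fixed units implementing the permutation twist, and the same twisted module structure on $A^{\otimes\abs{G:H}}$ (the paper's star action $b\star g=b^g\pi(g)$) --- and the step you flag as the main obstacle is unproblematic, since $\psi_{ht}=\psi_t$ makes the $h_{t,g}$\nbd terms cancel exactly as in the paper's four\nbd line verification that $\phi(c^g)=\phi(c)^{\pi(g)g}$. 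The only (cosmetic) divergence is the conclusion: you read the twisted regular module as an equivariant Morita bimodule between $(A^{\otimes n},\beta)$ and $(A^{\otimes n},\alpha)$ and invoke the Herman--Mitra characterization of equivalence, whereas the paper uses the same twisted module to exhibit $B^{\mathrm{op}}\otimes_R C\iso\enmo_R B$ as a trivial $G$\nbd algebra directly; the two mechanisms are interchangeable.
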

\begin{proof}
  Write $C=(A_H)^{\otimes G}$ and $B= A^{\otimes \abs{G:H}}$.
  We have to show that $B$ and $C$ are equivalent $G$\nbd algebras.
  First we show that $C$ and $B$ are isomorphic as $R$\nbd algebras.
  As in the construction of the tensor induced algebra, 
  let $\Omega= \{Hg\mid g\in G\}$ and
  $A_{\omega} = A^{\otimes g}\subseteq A\otimes_{RH}RG$,
  when $\omega=Hg$.
  Every element of $A_{\omega}$ has the form
  $a^{\otimes g}$ with $a\in A$.
  Note that $a^{\otimes g}\mapsto a^g$ yields an isomorphism
  $\phi_{\omega}\colon A_{\omega}\to A$ of $R$\nbd algebras
  which is independent of the choice of $g\in \omega$.
  We can view $B$ as a tensor product of copies of $A$ indexed
  by $\Omega$.
  Define
  $\phi\colon C\to B$
  by
  \[ \phi\left( \bigotimes_{\omega} a_{\omega} \right) 
    = \bigotimes_{\omega} \phi_{\omega}a_{\omega}.
  \]
  Then $\phi$ is a well-defined $R$\nbd algebra isomorphism.
  The problem is that $\phi$ does not commute with 
  the $G$\nbd actions on $C$ and $B$, respectively.
  
  Let 
  $\sigma$ be the group
  homomorphism from $\sym{{\Omega}}$, 
  the group of permutations of the set $\Omega$,
  into the centralizer of $G$ in $B^*$
  from Lemma~\ref{l:switchhom}.
  The action of $G$ on $\Omega$
  yields a group homomorphism from $G$ into $\sym{{\Omega}}$.
  Let $\pi\colon G\to \sym{{\Omega}}\to B^*$
  be the composition.
  Thus
  \[ \pi(g)^{-1}
     \left(\bigotimes_{\omega}b_{\omega}\right)
     \pi(g)
     = \bigotimes_{\omega} b_{\omega g^{-1}},\]
  where $b_{\omega g^{-1}}\in A$ occurs at position $\omega$
  in the last tensor.
  Thus
  \begin{align*}
   \phi\left( \bigotimes_{\omega}a_{\omega}\right)^{\pi(g)g}
       &= \left( \bigotimes_{\omega} \phi_{\omega}a_{\omega}\right)^{\pi(g)g}
        = \bigotimes_{\omega} \big(\phi_{\omega g^{-1}}(a_{\omega g^{-1}})\big)^g
       \\
       &= \bigotimes_{\omega} \phi_{\omega}\big((a_{\omega g^{-1}})^g\big)
       = \phi\left( \bigotimes_{\omega} (a_{\omega g^{-1}})^g \right)
       \\
      &= \phi\left( \big(\bigotimes_{\omega}a_{\omega}\big)^g \right).  
  \end{align*}
  Thus $\phi(c^g) = \phi(c)^{\pi(g)g}$ for all $c\in C$.
  
  For $b\in B$ and $g\in G$, set
  $b\star g = b^g\pi(g)$.
  This defines a new action of $G$ on $B$:
  \[ \left(b{\star g}\right){\star h}
    = (b^g\pi(g))^h\pi(h)= b^{gh}\pi(g)^h\pi(h)
    = b^{gh}\pi(gh)= b{\star(gh)},\]
  where we have used that $\pi$ is a group homomorphism
  and that $h\in G$ centralizes $\pi(g)$.
  Furthermore, for $r\in R$ we have $(br)\star g = (b\star g)r^g$.
  Thus $B$ is a right $RG$\nbd module via the 
  star action.
  
  Since $B$ is Azumaya over $R$, the natural homomorphism
  $B^{\text{op}}\otimes_R B\to\enmo_R B$ 
  is an isomorphism.
  Thus the map
  \[ \eps\colon B^{\text{op}}\otimes_R C \to \enmo_R B,
     \quad
     x (b\otimes c)^{\eps} = bx \phi(c)
     \quad(x\in B, \, b\in B^{\text{op}}, \, c\in C),
  \]
  is an isomorphism of $R$\nbd algebras.
  We claim that $\eps$ is an isomorphism
  of $G$\nbd algebras, where
  the $G$\nbd algebra structure of $\enmo_R B$ is that
  induced by the $RG$\nbd module structure of $B$
  defined by the star action.
  Namely, we have
  \[ x (b^g\otimes c^g)^{\eps} = b^gx \phi(c^g)
       = b^g x \phi(c)^{\pi(g)g}
  \]
  and 
  \begin{align*}
    x \left( (b\otimes c)^{\eps}\right)^g 
     = \big( (x\star g^{-1}) (b\otimes c)^{\eps}\big)\star g
     &= \big( bx^{g^{-1}}\pi(g^{-1})\phi(c)\big)^g\pi(g)
     \\
     &= b^g x \phi(c)^{\pi(g)g}.
  \end{align*}
  The claim follows.
  Thus $B^{\text{op}}\otimes_R C$
  is isomorphic to the trivial $G$\nbd algebra
  $\enmo_R B$.
  It follows that $B$ and $C$ are equivalent,
  as was to be shown.
\end{proof}
\begin{cor}
  Let $R$ be a commutative $G$-ring, $G$ a finite group.
  Let $p$ be a prime and $P$ a Sylow $p$-subgroup of $G$.
  Then the $p$\nbd torsion part
  $\BrCliff(G,R)_p$ of the Brauer-Clifford group
  is isomorphic to a subgroup of
  $\BrCliff(P, R)$.
\end{cor}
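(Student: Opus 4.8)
The plan is to run the standard transfer argument from the cohomology of finite groups, using Theorem~\ref{t:rescores} in place of the cohomological identity $\Cores\circ\Res=\abs{G:H}$. Since $P$ is a Sylow $p$\nbd subgroup, the index $\abs{G:P}$ is coprime to $p$. Both $\Res_P^G$ and $\Cores_P^G$ are homomorphisms of the abelian groups $\BrCliff(G,R)$ and $\BrCliff(P,R)$ (for $\Cores$ this is the corollary following Proposition~\ref{p:equivcores}; for $\Res$ it is immediate, since restricting $A\otimes_R B$ to $H$ gives $A_H\otimes_R B_H$), and by Theorem~\ref{t:rescores} their composite $\Cores_P^G\circ\Res_P^G$ is the power map $[A]\mapsto[A]^{\abs{G:P}}$.

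First I would restrict $\Res_P^G$ to the $p$\nbd torsion subgroup $\BrCliff(G,R)_p$ and show that this restriction is injective. So suppose $[A]\in\BrCliff(G,R)_p$ satisfies $\Res_P^G[A]=1$. Applying $\Cores_P^G$ and using Theorem~\ref{t:rescores} gives $[A]^{\abs{G:P}}=\Cores_P^G(\Res_P^G[A])=1$. On the other hand $[A]$ is $p$\nbd torsion, say $[A]^{p^k}=1$ for some $k\in\nats$. Since $\abs{G:P}$ and $p^k$ are coprime, Bézout furnishes integers $u,v$ with $u\abs{G:P}+vp^k=1$, whence
\[ [A]=[A]^{u\abs{G:P}+vp^k}
       =\bigl([A]^{\abs{G:P}}\bigr)^{u}\bigl([A]^{p^k}\bigr)^{v}=1. \]
Thus $\Res_P^G$ is injective on $\BrCliff(G,R)_p$.

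It remains to observe that an injective group homomorphism identifies its domain with its image, so $\Res_P^G$ carries $\BrCliff(G,R)_p$ isomorphically onto a subgroup of $\BrCliff(P,R)$, which is the assertion. I do not expect any genuine obstacle here: the entire content is Theorem~\ref{t:rescores} together with the coprimality of $\abs{G:P}$ and $p$. The only points deserving a word are that $\BrCliff(\phantom{I},R)$ is abelian — so that the power maps are endomorphisms and the Bézout manipulation is legitimate — and that $\Res_P^G$ is a homomorphism, both of which are routine.
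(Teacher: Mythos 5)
Your proof is correct and takes essentially the same route as the paper: the paper's own proof likewise deduces injectivity of $\Res^G_P$ on the $p$\nbd torsion part from the fact that $\Cores^G_P\circ\Res^G_P$ is powering with $\abs{G:P}$ (Theorem~\ref{t:rescores}), which is injective on $p$\nbd torsion since $\abs{G:P}$ is coprime to $p$. Your Bézout computation simply spells out the coprimality step that the paper leaves implicit.
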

\begin{proof}
  The map $\Res^G_P\colon \BrCliff(G,R)\to \BrCliff(P,R)$
  is injective when restricted to 
  the $p$-torsion part,
  since $\Cores\circ \Res$ is, by Theorem~\ref{t:rescores}.
\end{proof}
\begin{cor}
  Let
  $A$ be an Azumaya $G$\nbd algebra over the 
  commutative $G$\nbd ring $R$, where $G$ is finite.
  Write $[A]$ for the equivalence class of $A$ in
  $\BrCliff(G,R)$.
  The following are equivalent:
  \begin{enumthm}
  \item $[A]= 1$
  \item $[\Res^G_P(A)]= 1$ in
          $\BrCliff(P,R)$
        for all Sylow subgroups $P$ of $G$.
  \item For every prime $p$ there is a Sylow
        $p$\nbd subgroup $P$ such that
        $[\Res^G_P(A)]= 1$ in
        $\BrCliff(P,R)$.
  \end{enumthm}
\end{cor}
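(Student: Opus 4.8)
The plan is to establish the cyclic chain of implications (i)$\Rightarrow$(ii)$\Rightarrow$(iii)$\Rightarrow$(i), of which the first two links are essentially formal. For (i)$\Rightarrow$(ii) I would use that $\Res^G_P\colon \BrCliff(G,R)\to\BrCliff(P,R)$ is a group homomorphism: tensor products of $G$\nbd algebras restrict to tensor products of $P$\nbd algebras, and trivial $G$\nbd algebras $\enmo_R\nobreak P$ restrict to trivial $P$\nbd algebras, so $\Res^G_P$ respects both the multiplication and the trivial class. Hence it sends $[A]=1$ to $1$ for every subgroup, in particular for every Sylow subgroup. The implication (ii)$\Rightarrow$(iii) is immediate, since a property holding for \emph{all} Sylow $p$\nbd subgroups a fortiori holds for \emph{some} one.

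The real content lies in (iii)$\Rightarrow$(i), and the key tool is Theorem~\ref{t:rescores}, which gives $\Cores^G_P(\Res^G_P[A])=[A]^{\abs{G:P}}$. Fix a prime $p$ dividing $\abs{G}$ and let $P=P_p$ be a Sylow $p$\nbd subgroup furnished by~(iii), so that $\Res^G_P[A]=1$. Applying the homomorphism $\Cores^G_P$ together with Theorem~\ref{t:rescores} then yields
\[ [A]^{\abs{G:P_p}} = \Cores^G_P(\Res^G_P[A]) = \Cores^G_P(1) = 1. \]
Thus $[A]$ is annihilated by each index $\abs{G:P_p}=\abs{G}/p^{a_p}$, where $p^{a_p}$ denotes the exact power of $p$ dividing $\abs{G}$.

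To conclude I would exploit that the integers $\{\abs{G:P_p} : p\mid\abs{G}\}$ have greatest common divisor $1$: for a prime $q\mid\abs{G}$ the factor $q^{a_q}$ divides $\abs{G:P_p}$ for every $p\neq q$ but fails to divide $\abs{G:P_q}$, so no prime divides all of these indices. By Bézout's identity there are integers $m_p$ with $\sum_{p\mid\abs{G}} m_p\abs{G:P_p}=1$. Since $\BrCliff(G,R)$ is abelian (its multiplication is induced by $\otimes_R$), I may then compute
\[ [A] = [A]^{\sum_{p} m_p\abs{G:P_p}} = \prod_{p\mid\abs{G}} \big([A]^{\abs{G:P_p}}\big)^{m_p} = 1, \]
which is the desired equality.

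The main obstacle is conceptual rather than computational: one must recognise that Theorem~\ref{t:rescores} reduces the equivalence to a coprimality argument, and that the indices $\abs{G:P_p}$ are collectively coprime. I emphasise that no finiteness or torsion hypothesis on $\BrCliff(G,R)$ is required, because the Bézout combination makes sense in any abelian group (negative exponents denoting inverses); everything else---that restriction is a homomorphism and that $\BrCliff$ is abelian---is routine and already implicit in the earlier sections.
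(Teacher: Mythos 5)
Your proof is correct, and it follows exactly the route the paper intends: the corollary is stated without proof as an immediate consequence of Theorem~\ref{t:rescores}, namely that $\Cores^G_P\circ\Res^G_P$ raises $[A]$ to the power $\abs{G:P}$, combined with the observation that the indices $\abs{G:P_p}$ over the primes $p$ dividing $\abs{G}$ are collectively coprime. Your Bézout argument (which, as you note, needs no torsion hypothesis, and in fact not even commutativity, since only powers of the single element $[A]$ are involved) is precisely the intended deduction.
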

The next corollary is of course known, 
it also follows from the fact that the kernel
of $\BrCliff(G,R)\to\Br(R)$ is isomorphic
to a cohomology group $H^2(G, \mathcal{C}_R)$ 
for a certain abelian group 
$\mathcal{C}_R$~\cite[Theorem~4.1]{frohlichwall00}.
\begin{cor}
  Let $G$ be a finite group and $R$ a $G$\nbd ring. 
  Then the Brauer-Clifford group $\BrCliff(G,R)$ is torsion.  
\end{cor}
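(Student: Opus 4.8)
The plan is to reduce to the ordinary Brauer group and then apply Theorem~\ref{t:rescores} with the trivial subgroup. First I would record the special case $H=1$: a $1$\nbd algebra over $R$ is simply an $R$\nbd algebra (the compatibility condition with the $G$\nbd action becomes vacuous), a trivial $1$\nbd algebra is exactly $\enmo_R P$ for an $R$\nbd progenerator $P$, and the equivalence relation is the usual Brauer equivalence. Hence $\BrCliff(1,R)=\Br(R)$, the ordinary Brauer group of the commutative ring underlying the $G$\nbd ring $R$. Theorem~\ref{t:rescores}, applied with $H=1$, then gives for every Azumaya $G$\nbd algebra $A$ over $R$ the identity
\[ \Cores_1^G\big(\Res_1^G[A]\big)=[A]^{\abs{G:1}}=[A]^{\abs{G}} \]
in $\BrCliff(G,R)$, where $\Res_1^G[A]$ is viewed as a class in $\Br(R)$.

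The one external ingredient I would invoke is the classical fact that $\Br(R)$ is a torsion group for every commutative ring $R$. The point to be careful about is that an Azumaya algebra over $R$ need not have constant rank when its prime spectrum is disconnected; but the rank is a locally constant function on the quasi\nbd compact prime spectrum of $R$, hence takes only finitely many values, and the class of the algebra is annihilated by the least common multiple of the finitely many occurring degrees. I would cite the standard references for this. Consequently $\big(\Res_1^G[A]\big)^m=1$ in $\Br(R)$ for some positive integer $m$.

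Finally, since $\Cores_1^G\colon\BrCliff(1,R)\to\BrCliff(G,R)$ is a group homomorphism, applying it to the relation $\big(\Res_1^G[A]\big)^m=1$ and combining with the displayed identity yields
\[ [A]^{\abs{G}\,m}=\Cores_1^G\Big(\big(\Res_1^G[A]\big)^m\Big)=\Cores_1^G(1)=1, \]
so $[A]$ has finite order. As $A$ was an arbitrary Azumaya $G$\nbd algebra, every element of $\BrCliff(G,R)$ is torsion. The main obstacle is thus not Theorem~\ref{t:rescores} itself, which does the essential work, but rather the torsion property of the ordinary Brauer group $\Br(R)$; everything else is the bookkeeping of pushing a relation through the homomorphisms $\Res_1^G$ and $\Cores_1^G$. (This recovers, by a different route, the assertion of the remark preceding the statement, which deduces torsion from the cohomological description of the kernel of $\BrCliff(G,R)\to\Br(R)$.)
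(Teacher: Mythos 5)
Your proposal is correct and is essentially the paper's own argument: the paper also restricts along $H=1$ to land in $\Br(R)$, invokes the classical torsion of $\Br(R)$, and concludes $[A]^{n\abs{G}}=1$ via the $\Cores\circ\Res$ identity of Theorem~\ref{t:rescores}, just stated more tersely. You have merely made explicit the identification $\BrCliff(1,R)=\Br(R)$ and the bookkeeping of pushing the relation through $\Cores_1^G$, which the paper leaves implicit.
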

\begin{proof}
  We have a natural homomorphism
  $\Res\colon \BrCliff(G,R)\to \Br(R)$.
  The Brauer group $\Br(R)$ is torsion, 
  as is well known~\cite{KnusOj74,Saltman81}.
  So if $[A]^n = 1$ in $\Br(R)$, then
  $[A]^{n\abs{G}}=1$ in $\BrCliff(G,R)$.
\end{proof}

\section*{Acknowledgment}
  I thank Jason Starr for his help with the proof of
  Lemma~\ref{l:invar_trd} 
  via 
  \href{http://mathoverflow.net/q/139250}{mathoverflow}~\cite{Starr13_MO139250}.

%
\printbibliography   
%
\end{document}